\documentclass{amsart}

\usepackage{amssymb}
\usepackage[all]{xy}

\usepackage{pdfsync}

\title{On colimits and elementary embeddings}
\author{Joan Bagaria}

\address{ICREA (Instituci\'o Catalana de Recerca i Estudis Avan\c{c}ats) and
Departament de L\`ogica, Hist\`oria i Filosofia de la Ci\`encia, Universitat de Barcelona. 
Montalegre 6,
08001 Barcelona, Catalonia (Spain).}
\email{joan.bagaria@icrea.cat}
\thanks{The research work of the first author was partially supported by the Spanish 
Ministry of Science and Innovation under grants MTM2008-03389 and MTM2011-25229,
and by the Generalitat de Catalunya (Catalan Government) under grant 2009~SGR~187. The research of the second author was supported by the 
Heilbronn Institute for Mathematical Research at the University of Bristol,
the INFTY Network Short Visit Grant 2921 and INFTY Exchange Grant 2580,
and a Japanese Society for the Promotion of Science (JSPS)
Postdoctoral Fellowship
for Foreign Researchers and JSPS Grant-in-Aid for Scientific Research 
No. 23 01765.}
\author{Andrew Brooke-Taylor}
\address{Group of Logic, Statistics \& Informatics,
Graduate School of System Informatics,
Kobe University.
Rokko-dai 1-1,
Nada, Kobe,
657-0013, Japan.}
\email{andrewbt@gmail.com}

\subjclass[2000]{Primary: 03E55, 03E75, 18A15, 18C35. Secondary: 18A30, 18A35.}
\keywords{Colimits, elementary embeddings, Vope\v{n}ka's Principle, strongly compact cardinals, accessible categories.}


\newcommand{\al}{\alpha}
\newcommand{\ga}{\gamma}
\newcommand{\ka}{\kappa}
\newcommand{\la}{\lambda}
\renewcommand{\phi}{\varphi}
\newcommand{\D}{\mathcal{D}}
\newcommand{\restr}{\upharpoonright}
\newcommand{\Power}{\mathcal{P}}
\newcommand{\of}{\circ}
\newcommand{\sat}{\vDash}
\newcommand{\Id}{\textrm{Id}}
\newcommand{\into}{\hookrightarrow}
\newcommand{\st}{\,|\,}
\DeclareMathOperator{\Colim}{Colim}
\DeclareMathOperator{\crit}{crit}
\DeclareMathOperator{\Hom}{Hom}
\DeclareMathOperator{\Mor}{Mor}
\DeclareMathOperator{\Obj}{Obj}

\newcommand{\Set}{\mathbf{Set}}
\DeclareMathOperator{\Str}{\bf Str}
\DeclareMathOperator{\Mod}{\bf Mod}

\newcommand{\calA}{\mathcal{A}}
\newcommand{\calC}{\mathcal{C}}
\newcommand{\calI}{\mathcal{I}}
\newcommand{\calK}{\mathcal{K}}
\newcommand{\calL}{\mathcal{L}}
\newcommand{\calU}{\mathcal{U}}

\newcommand{\ZZ}{\mathbb{Z}}

\newcommand{\K}{\Str\Sigma}

\newcommand{\Vopenka}{Vop\v{e}nka}

\newcommand{\bPi}[1]{\Pi_{#1}}

\newtheorem{thm}{Theorem}
\newtheorem{lemma}[thm]{Lemma}
\newtheorem{defn}[thm]{Definition}

\newtheorem{eg}[thm]{Example}
\newtheorem{prop}[thm]{Proposition}
\newtheorem*{open}{Open Problem}

\begin{document}
\begin{abstract}
We give a sharper version of a theorem of Rosick\'y, Trnkov\'a and Ad\'amek \cite{RTA:UPLPC}, and a new proof of a theorem of Rosick\'y \cite{Ros:MDCM}, 
both about colimits in categories of structures. Unlike the original proofs, which use category-theoretic methods, we use set-theoretic arguments involving elementary embeddings given by large cardinals  such as $\alpha$-strongly compact and $C^{(n)}$-extendible cardinals.

\end{abstract}

\maketitle

\section{Introduction}

Many problems in category theory, homological algebra, and homotopy theory have been shown to be set-theoretical, involving the existence of large cardinals. For example, the problem of the existence of rigid classes in categories such as graphs, or metric spaces or compact Hausdorff spaces with continuous maps, which was studied by the Prague school in the 1960's  turned out to be equivalent to the  large cardinal principle now known as Vope\v{n}ka's Principle (VP) (see \cite{Kan:THI}). Another early example is John Isbell's 1960 result that $\mathbf{Set}^{op}$ is bounded if and only if there is no proper class of measurable cardinals. A summary of these and similar results can be found in the monograph \cite{PT:Comb}. In the 1980's, many statements in category theory  previously known to hold under the assumption of VP were shown to be actually equivalent to it. The following is a small sample of such statements (see \cite{AdR:LPAC} for an excellent survey, with complete proofs,  of these and many other equivalence  results):
\begin{enumerate}
\item The category $\mathbf{Ord}$ of ordinals cannot be fully embedded into the category $\mathbf{Gra}$ of graphs.
\item A category is locally presentable if and only if it is complete and bounded.
\item A category is accessible  if and only if it is bounded and has $\lambda$-directed colimits for some regular cardinal $\lambda$.
\item Every subfunctor of an accessible functor is accessible.
\item For every full embedding $F:\mathcal{A}\to \mathcal{K}$, where $\mathcal{K}$ is an accessible category, there is a regular cardinal $\lambda$ such that $F$ preserves $\lambda$-directed colimits.
\end{enumerate}

Even though each one of these statements is equivalent to VP, their known proofs from VP use category-theoretic, rather than set-theoretic,  methods and arguments. 

Recently, new equivalent formulations of VP in terms of elementary embeddings have been used in \cite{BCMR:DOCACS} to improve on previous results in category theory and homotopy theory.
For the reader unfamiliar with them, 
an \emph{elementary embedding} $j:V\to M$ is a function that 
preserves \emph{all} formulas: for every formula $\phi$ with parameters
$a_1,\ldots,a_n$ in $V$, $V\sat\phi(a_1,\ldots,a_n)$ if and only if
$M\sat\phi(j(a_1),\ldots,j(a_n))$, where $\sat$ denotes the model satisfaction 
relation.  Many of the strongest large cardinal axioms are most naturally
expressed in these terms, with the pertinent cardinal being the 
\emph{critical point}
$\crit(j)$ of the embedding, that is, the least cardinal $\ka$ for which
$j(\ka)\neq\ka$.

In \cite{BCMR:DOCACS}, these elementary embedding formulations are used
with a set-theoretic analysis to improve previous results by
showing that
much weaker large cardinal assumptions suffice for them. 
Specifically,
the necessary large cardinal hypothesis in each case depends on the complexity
of the formulas defining the categories involved, in the sense of the
L\'evy hierarchy. For example, the statement that, for a class
$\mathcal{S}$ of morphisms in an accessible category~$\mathcal{C}$, the orthogonal
class of objects $\mathcal{S}^{\perp}$ is a small-orthogonality class
is provable in ZFC if
$\mathcal{S}$ is $\Sigma_1$, it follows from the existence of a proper class
of supercompact cardinals if $\mathcal{S}$ is $\Sigma_2$, and from the
existence of a proper class of $C^{(n)}$\nobreakdash-extendible cardinals if $\mathcal{S}$ is 
 $\Sigma_{n+2}$
for $n\ge 1$. These cardinals form a hierarchy, and VP is equivalent to the existence of $C^{(n)}$\nobreakdash-extendible cardinals for all~$n$ (see also \cite{Ba:CC}).
As a consequence, the existence
of cohomological localizations of simplicial sets, a long-standing open problem in algebraic
topology solved in \cite{CSS:CCAG} assuming VP, follows just from the existence of sufficiently large supercompact cardinals.

In this paper we continue this programme of giving sharper versions of 
results in category theory, in this case results about colimits in accessible
categories, 
and more generally in categories of structures in infinitary languages. 
What is different in our work, however, is that we use the elementarity of
the embeddings in a strong way.  In previous work in this context, 
once one has obtained
an elementary embedding it has generally been used 
as little more than a convenient homomorphism.  By contrast,
we shall make great use of elementarity to make our proofs work,
as we move between various set-ups and their images under the embedding.
It is our belief that similar ``strong uses'' of elementarity will lead
to many improvements and new results in the area.

The first main result we consider (Theorem~\ref{ModStr} below) is due to 
J. Rosick\'y {\cite[Theorem~1 and Remark~1~(2)]{Ros:MDCM}}, which in turn generalizes an earlier result of Richter \cite{Ric:LKR}. 
The proof given by  Rosick\'y uses 
atomic diagrams, explicit ultraproducts, 
and something called ``purity''; our proof using elementary embeddings
and ideas from the original paper of Richter seems much cleaner.  

The second one we consider is item (5) in the list given above of statements equivalent to VP, a result of Rosick\'y, Trnkov\'a and Ad\'amek  from \cite{RTA:UPLPC} (see also \cite[Theorem~6.9]{AdR:LPAC}). 
In Theorem~\ref{thm2} below 
we prove a result that is simultaneously more general and
more refined, using appropriate  fragments of VP. 
Specifically, we use  $C^{(n)}$-extendible cardinals, with $n$ determined
by the complexity of the definitions of the cateogries involved. 
Further, we are able to show (Theorem \ref{reversal}) that the $C^{(n)}$-extendible cardinals are necessary in an almost, but unfortunately not exactly, level-by-level equivalence.

We would like to remark that while the proofs of the two theorems in the original papers are quite different from each other in their methods, our new proofs turn out to be quite similar. We think this shows that our set-theoretic arguments using elementary embeddings are more generally applicable  and  more natural in this context.

\section{Preliminaries}
\label{preliminaries}
As is standard, we denote by $V$ the universe of all sets.
For all of our elementary embeddings $j:V\to M$, 
$M$ is a class of $V$, so we can in effect treat $j$ as a functor from 
$V$ to $V$ 
(but note that it is only \emph{elementary} as a map with codomain $M$).
The notation $j``\D$ ($j$ point-wise on $\D$) denotes the diagram
consisting of the objects $j(d)$ for $d$ an object of $\D$, 
and the morphisms $j(f)$
for $f$ a morphism of $\D$. Note in particular that this will generally be
different from $j(\D)$, that is, $j$ applied to the diagram $\D$ 
as a whole: $j(\D)$ will generally contain many more objects and
morphisms than just those that are images of objects or morphisms from $\D$,
that is, those in $j``\D$.


Throughout the paper an important role will be played by the category
$\Str\Sigma$ of all $\Sigma$-structures for a signature $\Sigma$.  Here
a \emph{signature} is a set of function and relation symbols with associated
arities; formally a $\Sigma$-structure is something of the form 
$\langle A,\calI\rangle$ where $A$ is a set (the underlying set of the
structure) and $\calI$ is a function with domain $\Sigma$ 
(the interpretation function) such that for each $n$-ary relation symbol $R$, 
$\calI(R)\subseteq A^n$ (considered to be the set of tuple where $R$ holds),
and for each $n$-ary function symbol $f$,
$\calI(f)$ is a function from $A^n$ to $A$.
We abuse notation identifying $A$ with $\langle A,\calI\rangle$,
and we shall often write $R^A$ or $f^A$ for $\calI(R)$ or $\calI(f)$ 
respectively.

Associated to any signature, we have a language of formulas
with functions and relations from that signature; see for example
\cite[Sections 5.2 and 5.24]{AdR:LPAC}.
Note that 
we do not constrain ourselves to signatures in which all of the arities are 
finite;
in line with this, we also consider infinitary languages.
In this setting, $\calL_\la(\Sigma)$ denotes the language in which in the
definition of formulas we allow 
conjunctions, disjunctions, and universal and existential
quantifications of size less than $\la$.
Thus, $\calL_\omega(\Sigma)$ is the usual language over $\Sigma$ with
only finitary conjunctions, disjunctions and quantifications;
note however that if $\Sigma$ contains any symbols of infinite arity,
then $\calL_\omega(\Sigma)$ cannot have any fully quantified sentences
involving those symbols.
The notion of a structure $A$ satifying a formula $\phi$ in $\calL_\la(\Sigma)$,
denoted $A\sat\phi$,
is defined in line with the expected meaning --- see for example
\cite[Sections 5.3 and 5.26]{AdR:LPAC}.

Whilst $\Sigma$ may be infinite and infinitary, its basic logical role
means that we generally do not want it to be affected by the 
elementary embeddings we employ, which are only elementary for the 
language of set theory, 
$\calL_\omega(\{\in\})$.
Indeed in Section~\ref{GenCatssection}
we make assumptions to this effect.  However, in Section~\ref{StrModsection},
we can in fact handle a $\Sigma$ large enough to be changed by the embedding,
so long as the arity of each individual symbol is small enough to be unaffected.
To this end, let us
call a signature $\Sigma$ $\lambda$-ary if every symbol in $\Sigma$ has arity
strictly less than $\lambda$.

\begin{defn}\label{Redj}
Suppose $j$ is an elementary embedding from $V$ to $M$ 
with critical point $\ka$,
$\Sigma$ is a $\ka$-ary signature,
and $C$ is a $j(\Sigma)$-structure.  Then $C_{\Sigma}$ is the
$\Sigma$-structure obtained by reducing $C$ to $C'$ over signature
$j``\Sigma$, and then
considering the interpretation of $j(R)$ in $C'$ to be the interpretation
of $R$ in $C_{\Sigma}$, for any symbol $R$ in $\Sigma$.
\end{defn}

Here ``reducing'' is in the model-theoretic sense of simply ``forgetting''
those function and operation symbols in $j(\Sigma)$ but not $j``\Sigma$ 
(and leaving the underlying set unchanged).
Thus, the structure $\langle C, R^C \st R\in j(\Sigma)\rangle$
reduces to $\langle C, R^C \st R\in j``\Sigma\rangle$, and then it is simply
a matter of relabelling the indices to consider this to be
$\langle C, j(S)^C \st S\in\Sigma\rangle=C_\Sigma$.

We give some very basic lemmas about this notion.
For all of them,
take $j$, $\ka$, and $\Sigma$ as in 
Definition~\ref{Redj}.

\begin{lemma}\label{RedTMod}
For any $\lambda<\ka$ and 
any theory $T$ for the language $\calL_\lambda(\Sigma)$,
if $C$ is a model of $j(T)$, then $C_\Sigma$ is a model of $T$.
\end{lemma}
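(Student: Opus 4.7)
The plan is to reduce the lemma, sentence by sentence, to the following key claim: for every formula $\phi\in\calL_\la(\Sigma)$ and every $j(\Sigma)$-structure $C$, we have $C\sat j(\phi)$ if and only if $C_\Sigma\sat\phi$. Given the claim, the lemma is immediate: for each $\sigma\in T$, elementarity yields $j(\sigma)\in j(T)$, so $C\sat j(\sigma)$ by hypothesis, and the claim then gives $C_\Sigma\sat\sigma$, as required.

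To prove the claim I would first establish, by induction on the construction of $\phi$, that $j(\phi)$ is exactly the formula $\phi^{\ast}\in\calL_\la(j(\Sigma))$ obtained from $\phi$ by replacing every occurrence of a symbol $R\in\Sigma$ by $j(R)$. For an atomic formula $\phi=R(v_{i_1},\ldots,v_{i_\al})$ with $R$ of arity $\al<\ka$, one may take the variable indices to be ordinals less than $\ka$; these, together with $\al$ itself, are then all fixed by $j$, so $j(\phi)=j(R)(v_{i_1},\ldots,v_{i_\al})=\phi^{\ast}$. Negation is trivial. A conjunction, disjunction, or quantification in $\calL_\la(\Sigma)$ is indexed by a set of size $<\la<\ka$, which we may likewise take to be an ordinal below $\ka$; elementarity then forces $j$ to distribute over the operation, producing a connective or quantifier of the same type indexed by the same ordinal, whose $i$-th component is $j(\phi_i)=\phi_i^{\ast}$ by the induction hypothesis.

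Once $j(\phi)=\phi^{\ast}$ is known, the equivalence $C\sat\phi^{\ast}\iff C_\Sigma\sat\phi$ is immediate from Definition~\ref{Redj}: by construction the interpretation of $j(R)$ in $C$ is declared to be the interpretation of $R$ in $C_\Sigma$, so atomic formulas match, and since the semantic clauses for the connectives and quantifiers of $\calL_\la$ depend only on the underlying set and on the interpretations of the symbols, satisfaction propagates through the recursive construction of $\phi$.

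The main obstacle is the inductive step showing $j(\phi)=\phi^{\ast}$: one has to verify that whatever set-theoretic encoding of formulas is in use, the incidental objects (variable indices, the indexing of conjunctions and disjunctions, arity tuples) are all of size $<\ka$ and are therefore pointwise fixed by $j$, so that $j$ acts only by relabelling symbols of $\Sigma$. The hypotheses that $\Sigma$ is $\ka$-ary and that $\la<\ka$ are each used precisely at this point; a symbol of arity $\geq\ka$ or a connective of arity $\geq\ka$ could be moved nontrivially by $j$ and break the clean relabelling picture.
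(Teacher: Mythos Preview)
Your proposal is correct and is essentially a detailed unpacking of the paper's own proof, which consists of the single sentence ``This is immediate from the recursive definition of $\sat$.'' Both arguments proceed by induction along the recursive construction of formulas; you have simply made explicit the intermediate observation that $j(\phi)$ is literally $\phi$ with each $\Sigma$-symbol $R$ replaced by $j(R)$, and then noted that Definition~\ref{Redj} is arranged precisely so that this relabelling is undone when passing from $C$ to $C_\Sigma$.

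One small point worth tightening: when you write ``one may take the variable indices to be ordinals less than $\kappa$,'' you are implicitly relying on a choice of encoding for $\calL_\la(\Sigma)$ in which the variable stock and the auxiliary indexing data all have rank below $\ka$. This is harmless (and standard), but strictly speaking $\phi$ is a fixed set and $j(\phi)$ is whatever $j$ does to it; the cleanest phrasing is to fix once and for all a canonical presentation of $\calL_\la(\Sigma)$ with variables $\{v_\alpha:\alpha<\la\}$, so that everything in a formula other than the $\Sigma$-symbols has rank below $\ka$ and is therefore genuinely fixed by $j$. With that convention in place your induction goes through exactly as written.
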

\begin{proof}
This is immediate from the recursive definition of $\sat$, as presented
for example in \cite[Section~5.26]{AdR:LPAC}.
\end{proof}

Note however that with Lemma~\ref{RedTMod} we are \emph{not} claiming that 
$M\sat$ ``$C$ is a model of $j(T)$'' implies that
\mbox{$V\sat$ ``$C_\Sigma$ is a model of $T$''}.  
For this further step we shall need $M$ to contain all of the 
tuples of length less than $\la$ of elements of $C$, 
so that the statement ``$C_\Sigma$ is a model of $T$'' is
absolute from $M$ to $V$.

\begin{lemma}
For any $\lambda<\ka$ and 
any theory $T$ for the language $\calL_\lambda(\Sigma)$,
if $A$ is a model of $T$, then $j(A)_\Sigma$ is a model of $T$.
\end{lemma}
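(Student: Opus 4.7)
The plan is to prove, by induction on $\phi \in \calL_\la(\Sigma)$, the biconditional
\[
A \sat \phi(\bar a) \iff j(A)_\Sigma \sat \phi(j(\bar a))
\]
in $V$, for all $\bar a \in A^{|\mathrm{fv}(\phi)|}$; restricting to the sentences of $T$ yields the lemma. The atomic case $\phi = R(\bar x)$ is immediate from the elementarity of $j$ applied to the $\Delta_0$ statement $\bar a \in R^A$: we have $\bar a \in R^A$ iff $j(\bar a) = (j(a_i))_i \in j(R)^{j(A)}$, and the right-hand side equals $R^{j(A)_\Sigma}$ by the defining property of the reduct in Definition~\ref{Redj}.

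The Boolean cases (conjunctions, disjunctions, negations) are routine from the induction hypothesis, using that $j$ fixes every ordinal $<\ka$ and hence fixes every index set of size $<\la<\ka$ pointwise; in particular, $j(\bigwedge_{i<\alpha}\phi_i) = \bigwedge_{i<\alpha} j(\phi_i)$ and similarly for disjunctions, so the structural decomposition of $j(\phi)$ mirrors that of $\phi$. For an existential $\phi = \exists \bar x\,\psi$ with $|\bar x| = \alpha < \la$, the forward direction sends an $A^\alpha$-witness $\bar b$ for $\psi$ to the $V$-witness $j(\bar b) = (j(b_i))_{i<\alpha} \in j(A)^\alpha$ for $\exists\bar x\,\psi(\bar x, j(\bar a))$, and concludes by the induction hypothesis applied to $\psi$.

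The main obstacle is the reverse direction in the quantifier case, since a $V$-witness $\bar c \in j(A)^\alpha$ for $j(A)_\Sigma \sat \psi(\bar c, j(\bar a))$ need not arise as $j(\bar b)$ for any $\bar b \in A^\alpha$. My plan to close this gap is to appeal to elementarity of $j$ applied to the sentence as a whole: elementarity gives $V \sat \text{``}A \sat \phi(\bar a)\text{''}$ iff $M \sat \text{``}j(A) \sat j(\phi)(j(\bar a))\text{''}$. Combined with the reduct-correspondence $j(A)_\Sigma \sat \phi(\bar c) \iff j(A) \sat j(\phi)(\bar c)$ (proven in $V$ alongside the main induction, exactly as in Lemma~\ref{RedTMod}), and with the forward direction just established, this pins down the $V$-truth of $j(A)_\Sigma \sat \phi(j(\bar a))$ so that the biconditional closes. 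The negation case then follows formally from the biconditional in the induction hypothesis.
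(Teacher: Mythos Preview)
Your induction aims at something strictly stronger than the lemma: the biconditional $A\sat\phi(\bar a)\iff j(A)_\Sigma\sat\phi(j(\bar a))$ for all formulas and parameters is precisely the assertion that $j\restr A$ is an $\calL_\la(\Sigma)$-elementary embedding, which is the content of Lemma~\ref{elemnat}, and that lemma explicitly requires the extra hypothesis that $M$ be closed under tuples of length less than $\la$.  Without that closure your backward existential step genuinely fails, and your proposed fix does not repair it: elementarity of $j$ gives you $M\sat\text{``}j(A)\sat j(\phi)(j(\bar a))\text{''}$, whereas the reduct-correspondence you invoke is a $V$-statement, so combining them requires exactly the $M$-to-$V$ absoluteness of satisfaction that the paper's caveat after Lemma~\ref{RedTMod} warns is unavailable in general.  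You have simply conflated $M$-satisfaction with $V$-satisfaction.

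The paper's proof sidesteps all of this.  It does not induct on formulas at all: it applies elementarity once to the set-theoretic statement ``$A$ is a model of $T$'' to obtain that $j(A)$ is a model of $j(T)$, and then invokes Lemma~\ref{RedTMod} to conclude that $j(A)_\Sigma$ is a model of $T$.  Notice that your own fix for the hard case---``appeal to elementarity of $j$ applied to the sentence as a whole''---is exactly this move; if it works, it works for every sentence of $T$ directly and renders your induction redundant.  So the right repair is not to patch the induction but to abandon it in favour of the paper's two-step argument.
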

\begin{proof}
This is immediate from Lemma~\ref{RedTMod} by elementarity.
\end{proof}

\begin{lemma}
Let $\cdot_\Sigma$ denote the function that takes $j(\Sigma)$-structures
$C$ to $C_\Sigma$ and takes $j(\Sigma)$-homomorphisms to $\Sigma$-homomorphisms
by leaving them unchanged on the underlying set of the domain.
Then
$\cdot_{\Sigma}$ is a functor from the category of $j(\Sigma)$-structures
to the category of $\Sigma$-structures.\hfill\qedsymbol
\end{lemma}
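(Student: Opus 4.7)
The plan is to verify the two functor axioms (preservation of identities and of composition), having first checked that the image of a morphism is indeed a morphism in the target category. All three checks are essentially immediate from the definitions.

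First I would verify that if $h:C\to D$ is a $j(\Sigma)$-homomorphism, then the same set function $h:C_\Sigma\to D_\Sigma$ is a $\Sigma$-homomorphism. The key point here is that $\Sigma$ is $\ka$-ary and $\ka=\crit(j)$, so every symbol $s\in\Sigma$ has arity strictly less than $\ka$ and is therefore fixed by $j$ on its arity; hence $j(s)\in j(\Sigma)$ has the same arity as $s$. By the definition of the reduction and relabelling operation, the interpretation of a symbol $s\in\Sigma$ in $C_\Sigma$ is precisely $j(s)^C$, and similarly for $D_\Sigma$. Since $h$ preserves the interpretation of every symbol in $j(\Sigma)$, and in particular of each $j(s)$ for $s\in\Sigma$, the corresponding homomorphism condition for $s$ in the structures $C_\Sigma$ and $D_\Sigma$ holds immediately (case-splitting on whether $s$ is a relation or function symbol amounts only to choosing which form of the condition to write down).

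Preservation of identities and composition is then trivial: $\cdot_\Sigma$ leaves both the underlying sets of objects and the underlying functions of morphisms unchanged, while in each of the two categories identity morphisms and composition are given, respectively, by identity functions and set-theoretic composition of underlying functions. Hence the identity morphism on $C$ in $j(\Sigma)$-structures is sent to the identity on $C_\Sigma$ in $\Sigma$-structures, and for $h:C\to D$ and $k:D\to E$ in $j(\Sigma)$-structures, $(k\of h)_\Sigma=k_\Sigma\of h_\Sigma$ as set functions, so as $\Sigma$-homomorphisms.

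There is no genuine obstacle here; the only point requiring any care is the arity check in the first step, since without the hypothesis that $\Sigma$ is $\ka$-ary the correspondence $s\leftrightarrow j(s)$ used in the relabelling step would not yield symbols of matching arities, and the definition of $C_\Sigma$ would break down. This is exactly why $\kappa$-aryness was built into the setup of Definition~\ref{Redj}.
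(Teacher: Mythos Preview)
Your argument is correct and is exactly the routine verification the paper has in mind; the paper simply places the \qedsymbol\ at the end of the statement and gives no proof, treating the result as immediate from the definitions. Your write-up spells out precisely those details (well-definedness on morphisms via the arity observation, then trivial preservation of identities and composition), so nothing is missing or divergent.
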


We shall studiously include $\Sigma$ subscripts in our notation for objects,
but omit them from homomorphisms, since they have no effect on them
as functions.

We denote by $\Str\Sigma$ the category of all $\Sigma$-structures with
homomorphism as the morphisms, and by 
$\Set$ the category of all sets with arbitrary functions as the morphisms.
In \cite{Ros:MDCM}, Rosick\'y allows for a change of language.  
We note that, much as Theorem~\ref{ModStr} seems to say that the theory is
irrelevant for $\ka$-directed colimits, so too is the language, in the
following sense.
For any $\ka$-ary signature $\Sigma$ and any $\ka$-directed diagram
in $\Str\Sigma$, the colimit of the diagram exists, and is the direct limit
of the structures.  In particular, the underlying set of the colimit is 
simply the colimit in $\Set$ of the diagram 
of the underlying sets, and the interpretations in the colimit 
of the relation and
function symbols of $\Sigma$
are then uniquely determined
(in the terminology of \cite{MacL:CWM}, the forgetful functor from
$\Str\Sigma$ to $\Set$, 
which takes each $\Sigma$-structure to its underlying set,
\emph{creates} $\ka$-directed colimits).
Here $\ka$-directedness is required so that every term
$f(\mathbf{a})$ in the colimit (for $f\in\Sigma$) 
appears in one of the structures of the
diagram --- one that contains every component of $\mathbf{a}$.
In fact $\Set$ is simply the special case in which we have reduced to empty
signature: we likewise have that 
for any extension $\ka$-ary signature $\Sigma'\supseteq\Sigma$,
the reduction functor from $\Str\Sigma'$ to $\Str\Sigma$ preserves 
$\lambda$-directed colimits.
In particular, to obtain \cite[Theorem~1]{Ros:MDCM}, it suffices to consider
a single language, which is the approach we take here.


We denote by $(j\restr\cdot)$ the natural transformation 
that associates to $A$ the
function $j\restr A:A\to j(A)_\Sigma$.
Of course, we should check that it is indeed a natural transformation.
\begin{lemma}\label{elemnat}
For any elementary embedding $j:V\to M$ with critical point $\ka$ 
and any $\la$-ary signature $\Sigma$ for some $\la<\ka$,
$(j\restr\cdot)$
is a natural transformation from the identity functor to the functor 
$\cdot_{\Sigma}\of j$.
That is, for any $\Sigma$-structure homomorphism $f:A\to B$, 
the following diagram commutes.
\[
\xymatrixcolsep{3pc}
\xymatrix{
A\ar[r]^{j\restr A}\ar[d]_f&j(A)_{\Sigma}\ar[d]^{j(f)}\\
B\ar[r]_{j\restr B}        &j(B)_{\Sigma}
}
\]
Moreover, if $M$ is closed under tuples of length less than $\la$, then
each morphism $j\restr A$ is an elementary embedding from 
$A$ to $j(A)_\Sigma$.
\end{lemma}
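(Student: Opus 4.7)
The plan is to establish the naturality square first, and then upgrade to the elementarity claim under the closure hypothesis on $M$. For naturality I first need to check that each component $j\restr A$ is a $\Sigma$-homomorphism $A\to j(A)_\Sigma$, so that it qualifies as a morphism in the target category. The key computation is that every symbol in $\Sigma$ has arity $n<\la<\ka$, and so for $\mathbf{a}\in A^n$ we have $j(\mathbf{a})=(j(a_0),\ldots,j(a_{n-1}))$ componentwise, because $j$ fixes $n$ and each index $i<n$. Then by elementarity of $j$, for an $n$-ary $R\in\Sigma$ we have $\mathbf{a}\in R^A$ iff $j(\mathbf{a})\in j(R)^{j(A)}$, and the right hand side equals $R^{j(A)_\Sigma}$ by Definition~\ref{Redj}; the analogous calculation handles function symbols. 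Commutativity of the square itself is immediate from $j(f)(j(a))=j(f(a))$, which holds because $j$ applied to the pair $(a,f(a))\in f$ yields $(j(a),j(f(a)))\in j(f)$.

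For the moreover part I would argue by induction on the complexity of $\phi\in\calL_\la(\Sigma)$ that for every tuple $\mathbf{a}$ of parameters in $A$, $A\sat\phi(\mathbf{a})$ iff $j(A)_\Sigma\sat\phi(j(\mathbf{a}))$. The argument chains two equivalences: first, elementarity of $j\colon V\to M$ gives $A\sat\phi(\mathbf{a})$ iff $M$ models ``$j(A)\sat j(\phi)(j(\mathbf{a}))$''; second, satisfaction of $j(\phi)$ by $j(A)$ in $M$ corresponds, via the relabeling built into Definition~\ref{Redj} and absoluteness, to satisfaction of $\phi$ by $j(A)_\Sigma$ in $V$. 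For atomic formulas this second equivalence is essentially the same calculation already used above. For infinitary conjunctions and disjunctions indexed by a set $I$ with $|I|<\la<\ka$, I would use the standard fact that $j(I)=j``I$ whenever $|I|<\crit(j)$, so the $M$-level conjunction $\bigwedge_{i'\in j(I)}j(\psi)(i')$ unpacks to $\bigwedge_{i\in I}j(\psi(i))$, and the inductive step then goes through componentwise.

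The main obstacle, and the point at which the closure hypothesis on $M$ is needed, is the quantifier step. For $\phi$ of the form $\exists\mathbf{x}\,\psi(\mathbf{x},\mathbf{y})$ with $|\mathbf{x}|<\la$, a witness for $V\sat(j(A)_\Sigma\sat\phi(j(\mathbf{a})))$ is a tuple of length less than $\la$ from $j(A)$ lying in $V$, while a witness for $M\sat(j(A)\sat j(\phi)(j(\mathbf{a})))$ is the same sort of object but must lie in $M$. These two notions agree precisely because the closure assumption on $M$, namely $M^{<\la}\subseteq M$, guarantees that every $V$-tuple of length less than $\la$ of elements of $j(A)$ already lies in $M$. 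With absoluteness secured at the quantifier step, the induction closes and $j\restr A\colon A\to j(A)_\Sigma$ is elementary as claimed.
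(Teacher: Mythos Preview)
Your argument is correct and is essentially the paper's proof unfolded: where you run an explicit induction on formula complexity to establish absoluteness of satisfaction between $M$ and $V$, the paper compresses this into the single remark that the satisfaction relation for set-sized models is $\Delta_1$ in the parameter $j(A)^{<\la}$, and hence absolute once $M$ is closed under ${<}\la$-tuples (so that $j(A)^{<\la}$ is the same computed in $M$ or in $V$).

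One small wrinkle in your write-up: the inductive hypothesis you announce is ``$A\sat\phi(\mathbf{a})$ iff $j(A)_\Sigma\sat\phi(j(\mathbf{a}))$'' with $\mathbf{a}$ ranging over tuples from $A$, but at the existential-quantifier step the witness $\mathbf{b}$ is an arbitrary tuple from $j(A)$, not necessarily of the form $j(\cdot)$. What you actually need (and what your discussion of the quantifier step implicitly uses) is the stronger absoluteness statement that for every tuple $\mathbf{c}$ from $j(A)$, $M\sat\bigl(j(A)\sat j(\phi)(\mathbf{c})\bigr)$ iff $V\sat\bigl(j(A)_\Sigma\sat\phi(\mathbf{c})\bigr)$; with this as the inductive hypothesis the quantifier step goes through exactly as you describe, and combining it with the elementarity step gives the desired conclusion for parameters of the form $j(\mathbf{a})$.
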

\begin{proof}
First note that for any $A$, $j\restr A$ is a $\Sigma$-structure homomorphism
to $j(A)_\Sigma$: if $A\sat R(\langle a_i\st i\in\alpha\rangle)$,
then by elementarity $j(A)\sat j(R)(\langle j(a_i)\st i\in\al\rangle)$
(in $M$, but this is absolute, so also in $V$),
so $j(A)_\Sigma\sat R(\langle j(a_i)\st i\in\al\rangle)$.
The corresponding statement holds for equations involving function symbols, 
showing that $j\restr A$
is a homomorphism. 
Further, if $M$ is closed under tuples of length less than $\la$, then
for \emph{any} first order formula $\phi$ in the language $\Sigma$,
$j(A)\sat\phi(\langle j(a_i)\st i\in\alpha\rangle)$ in $M$ if and only if
$j(A)\sat\phi(\langle j(a_i)\st i\in\alpha\rangle)$ in $V$, since
this satisfaction statement is $\Delta_1$ in the parameter $j(A)^{<\la}$,
and hence absolute between such $M$ and $V$.
Indeed, equivalent $\Sigma_1$ and $\Pi_1$ definitions may be extracted from the
usual recursive definition of $\sat$ for set-sized models, 
as given for example in 
\cite[Section~5.26]{AdR:LPAC}.
This shows that, entirely in $V$,
$j\restr A$ is an elementary embedding from $A$ to $j(A)$.

Now for any $a\in A$, $M\sat j(f)(j(a))= j(f(a))$, by elementarity.
But this statement is also absolute, and so also true in $V$.
\end{proof}

Penultimately for this section, we enunciate a simple observation that will be
useful.

\begin{lemma}\label{subcatcolimpres}
Suppose
$\calC_0$ is a full subcategory of $\calC_1$ and $\D$ is a diagram in
$\calC_0$.  If a colimit $C$ of $\D$ in $\calC_1$ exists and lies in
$\calC_0$, then $C$ is also a colimit of $\D$ in $\calC_0$, with the
same colimit cocone.
\end{lemma}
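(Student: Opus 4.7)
The plan is to unpack the definition of a colimit and exploit the fullness hypothesis to transfer universality from $\calC_1$ down to $\calC_0$. The key point is that because $\calC_0$ is full in $\calC_1$, we have $\Hom_{\calC_0}(X,Y)=\Hom_{\calC_1}(X,Y)$ for any $X,Y\in\calC_0$, so morphisms between objects that happen to lie in $\calC_0$ cannot tell the difference between the two categories.

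First I would verify that the colimit cocone $(\iota_d\colon d\to C)_{d\in\D}$ given in $\calC_1$ is actually a cocone in $\calC_0$. Each $d$ is an object of $\calC_0$ since $\D$ is a diagram in $\calC_0$, and $C$ lies in $\calC_0$ by hypothesis, so by fullness each $\iota_d$ is a morphism of $\calC_0$. The commuting triangles witnessing that this is a cocone over $\D$ are already morphism equalities in $\calC_1$, and hence hold a fortiori in $\calC_0$.

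Next, to see universality in $\calC_0$, I would take an arbitrary cocone $(\tau_d\colon d\to C')_{d\in\D}$ over $\D$ in $\calC_0$. Since $\calC_0$ is a subcategory of $\calC_1$, this is also a cocone in $\calC_1$, so the universal property of $C$ in $\calC_1$ yields a unique morphism $u\colon C\to C'$ in $\calC_1$ satisfying $u\of\iota_d=\tau_d$ for every $d$. By fullness again, $u$ is a morphism of $\calC_0$, giving existence of the mediating morphism in $\calC_0$. Uniqueness in $\calC_0$ is immediate from uniqueness in $\calC_1$, since any competing mediator in $\calC_0$ is also a mediator in $\calC_1$ and hence must coincide with $u$.

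There is no real obstacle here; the statement is essentially a formality once one observes that fullness makes the inclusion $\calC_0\hookrightarrow\calC_1$ bijective on Hom-sets between objects of $\calC_0$. The only thing to be careful of is not to omit the fullness hypothesis when arguing that the mediating morphism returned by the universal property in $\calC_1$ actually lives in $\calC_0$.
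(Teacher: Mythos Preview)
Your argument is correct and complete: fullness ensures the colimit cocone lives in $\calC_0$, and the universal property transfers because any $\calC_0$-cocone is a $\calC_1$-cocone and the mediating morphism returned lies in $\calC_0$ again by fullness. The paper itself states this lemma as a ``simple observation'' without proof, so your write-up simply spells out what the authors left implicit.
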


Finally, the following definition is useful
in the discussion both of infinitary languages and of large
cardinals.

\begin{defn}
For any set $X$ and any cardinal $\ka$, $\Power_\ka(X)$ denotes the the set of
all subsets of $X$ of cardinality less than $\ka$.
\end{defn}

\section{Colimits of structures and models}\label{StrModsection}

Theorem~\ref{ModStr} below is due to 
J. Rosick\'y {\cite[Theorem~1 and Remark~1~(2)]{Ros:MDCM}}. 
The proof given by  Rosick\'y uses 
atomic diagrams, explicit ultraproducts, 
and something called ``purity''; we avoid all that, using elementary embeddings
and ideas from the original paper of Richter \cite{Ric:LKR} that
Rosick\'y's theorem extends.  
We stick reasonably closely to Rosick\'y's notation in our proof of the theorem,
but note that we use $\D$ 
for the diagram rather than $D$ so that non-caligraphic uppercase Roman
letters near the start of the alphabet are always objects in one of 
our two main categories ($T$-models and $\Sigma$-structures).
Compositions of homomorphisms with cocones have the obvious meaning
(as do compositions on the other side of natural transformations with cocones).

The large cardinal axiom required for Theorem~\ref{ModStr} is the following.

\begin{defn}
If $\alpha \leq \kappa$ are uncountable cardinals, then we say that $\kappa$ is \emph{$\alpha$-strongly compact} if for every set $X$, every $\kappa$-complete filter on $X$ can be extended to an $\alpha$-complete ultrafilter on $X$.
The cardinal
$\kappa$ is \emph{strongly compact} if it is $\kappa$-strongly compact.
\end{defn}

Note that if $\ka\leq\la$ and $\ka$ is $\al$-strongly compact, then 
$\la$ is also $\al$-strongly compact.  In particular, $\al$-strongly
compact cardinals can be singular; but even further, 
the \emph{least} $\al$-strongly compact cardinal can be singular,
as was shown in \cite{Ba-Ma:GR} for $\al=\omega_1$ under the assumption of the
consistency of the existence of a supercompact cardinal.
We shall make use of the following characterization of 
$\al$-strongly compact cardinals in terms of elementary embeddings.

\begin{thm}[\mbox{\cite[Theorem~4.7]{Ba-Ma:GR}}]
\label{alphasc}
The following are equivalent for any uncountable cardinals $\alpha <\kappa$:
\begin{enumerate}
\item\label{BaMaGRstrcmpct}
$\kappa$ is $\alpha$-strongly compact.
\item\label{BaMaGRelem} 
For every $\gamma$ greater than or equal to $\kappa$ 
there exists an elementary embedding $j:V\to M$ definable in $V$, 
with $M$ transitive, 
${}^\al M\subset M$, $\crit(j)\geq\al$, and
such that 
there exists $Z\in M$ with 
$j`` \gamma = \{ j(\beta ):\beta <\gamma\}\subseteq Z$ 
and $M\models |Z|<j(\kappa)$. 
\item\label{BaMaGRfine} For every cardinal $\ga>\ka$,
there exists an $\alpha$-complete fine ultrafilter 
on $\mathcal{P}_\kappa (\ga)$.
\end{enumerate}
\end{thm}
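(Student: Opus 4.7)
The plan is to establish the equivalences in the cycle $(1) \Rightarrow (3) \Rightarrow (2) \Rightarrow (1)$, following the classical pattern of proof for strongly compact cardinals while carefully tracking the $\alpha$-completeness parameter throughout. For $(1) \Rightarrow (3)$, I would fix $\gamma > \kappa$ and consider the fine filter $F$ on $\Power_\kappa(\gamma)$ generated by the cones $\hat{\beta} := \{s \in \Power_\kappa(\gamma) : \beta \in s\}$ for $\beta < \gamma$. This filter is $\kappa$-complete, since the intersection of fewer than $\kappa$ such cones, indexed by some $A \in \Power_\kappa(\gamma)$, contains $A$ itself; applying $\alpha$-strong compactness extends $F$ to an $\alpha$-complete ultrafilter on $\Power_\kappa(\gamma)$, which remains fine.

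For $(3) \Rightarrow (2)$, I would let $U$ be an $\alpha$-complete fine ultrafilter on $\Power_\kappa(\gamma')$ for some sufficiently large $\gamma' > \gamma$, and form the ultrapower $j : V \to M := \mathrm{Ult}(V, U)$. The $\alpha$-completeness of $U$ gives $\crit(j) \geq \alpha$ together with the required closure ${}^\alpha M \subseteq M$. The desired witness is $Z := [\mathrm{id}_{\Power_\kappa(\gamma')}]_U$: by elementarity, $M$ sees $Z$ as an element of $j(\Power_\kappa(\gamma'))$, whence $|Z|^M < j(\kappa)$, and fineness of $U$ yields $j(\beta) \in Z$ for every $\beta < \gamma'$, giving $j``\gamma \subseteq Z$.

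For $(2) \Rightarrow (1)$, suppose $F$ is a $\kappa$-complete filter on a set $X$, and apply $(2)$ with $\gamma \geq \max(|F|, \kappa)$ to obtain $j : V \to M$ and $Z \in M$. Using an injection $e : F \hookrightarrow \gamma$ and its image $j(e) : j(F) \hookrightarrow j(\gamma)$ in $M$, one transfers $Z$ inside $M$ to $Z' := j(e)^{-1}(Z) \subseteq j(F)$ with $|Z'|^M < j(\kappa)$ and $j``F \subseteq Z'$ (since $j(e)(j(A)) = j(e(A)) \in j``\gamma \subseteq Z$ for each $A \in F$). Elementarity makes $j(F)$ a $j(\kappa)$-complete filter in $M$, so $\bigcap Z' \neq \emptyset$ there; fixing $x$ in this intersection, the set $U := \{A \subseteq X : x \in j(A)\}$ is an ultrafilter on $X$ extending $F$, and is $\alpha$-complete because $\crit(j) \geq \alpha$ forces $j$ to commute with intersections of fewer than $\alpha$ subsets of $X$.

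The step I expect to be the main obstacle is the bookkeeping in $(2) \Rightarrow (1)$: the hypothesis delivers covering only for images of \emph{ordinals}, so one must translate this via the injection $F \hookrightarrow \gamma$ into the covering of $j``F$ needed to invoke $j(\kappa)$-completeness inside $M$. Everything else reduces to standard derived-ultrafilter and ultraproduct manipulations.
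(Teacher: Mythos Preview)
The paper does not prove this theorem; it is quoted from \cite{Ba-Ma:GR} (Theorem~4.7 there), and no proof environment follows the statement. The paper only adds two remarks afterward: that for regular $\kappa$ one obtains (\ref{BaMaGRfine}) from (\ref{BaMaGRstrcmpct}) by extending the $\kappa$-complete filter generated by the cones $\{X\in\Power_\kappa(\gamma):\beta\in X\}$, and that the embedding in (\ref{BaMaGRelem}) is the ultrapower of $V$ by an $\alpha$-complete fine ultrafilter as in (\ref{BaMaGRfine}). Your cycle $(1)\Rightarrow(3)\Rightarrow(2)\Rightarrow(1)$ and your sketches for the first two implications line up exactly with these remarks, and your $(2)\Rightarrow(1)$ via a derived ultrafilter (transporting the ordinal cover $Z$ to a cover of $j``F$ through an injection $F\hookrightarrow\gamma$) is the standard argument. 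So there is nothing in the paper to compare against beyond observing that your approach is the expected one and agrees with the hints given.

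One genuine caveat worth flagging: your $(1)\Rightarrow(3)$, like the paper's parenthetical remark, tacitly assumes $\kappa$ is regular. For singular $\kappa$ a $\kappa$-complete filter is automatically $\kappa^+$-complete, and the $\kappa^+$-complete filter generated by the cones on $\Power_\kappa(\gamma)$ is improper (a $\kappa$-sized intersection of cones is empty). Since the theorem as stated allows singular $\kappa$ --- and the paper explicitly notes just before it that $\alpha$-strongly compact cardinals can be singular --- this case needs a separate argument, which you would have to extract from \cite{Ba-Ma:GR}.
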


Here a \emph{fine} ultrafilter $\calU$ on $\Power_\ka(\ga)$ is one such that 
for every $\al\in\ga$, 
\[
\{X\in\Power_\ka(\ga)\st \al\in X\}\in\calU.
\]
In the case of a regular $\kappa$, such an ultrafilter can be obtained using $\al$-strong compactness
by extending the $\ka$-complete filter generated by such sets.
The embedding $j$ in (\ref{BaMaGRelem}) 
is obtained  by  taking the ultrapower of $V$ 
by an $\alpha$-complete fine ultrafilter $\calU$ 
on $\Power_\ka(\ga)$ as in (\ref{BaMaGRfine}).

\begin{thm}\label{ModStr}
Let $\lambda$ be an infinite  cardinal,
$T$ a theory for the language $\calL_\lambda(\Sigma)$ over $\lambda$-ary signature
$\Sigma$,
and suppose there exists a cardinal $\kappa$ that is $\alpha$-strongly compact,
where $\alpha =\max\{ \lambda, \omega_1\}$. 
Suppose $\D$ is a $\kappa$-directed diagram of models for $T$, 
and suppose it has a colimit in the category $\Mod T$ of models of $T$.
The diagram $\D$ may also be considered to be a diagram in $\Str\Sigma$;
let $A$ be its colimit in this category.  Then $A$ is a model of $T$.
Hence, $A$ is the colimit of $\D$ in $\Mod T$.
\end{thm}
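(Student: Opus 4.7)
The plan is to use the $\alpha$-strong compactness of $\kappa$ to produce an elementary embedding $j:V\to M$ that ``compresses'' the $\kappa$-directed diagram $\D$ onto a single large vertex of $j(\D)$ modeling $j(T)$, and then to transfer satisfaction of $T$ back to $A$ via elementarity. Throughout, by Lemma~\ref{subcatcolimpres} it suffices to show $A\sat T$; the final clause of the theorem then follows automatically, since $\Mod T$ is a full subcategory of $\Str\Sigma$.

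First I would apply Theorem~\ref{alphasc}, taking $\gamma$ at least as large as the cardinality of the indexing category $I$ of $\D$, to obtain $j:V\to M$ with $\crit(j)\geq\alpha$, ${}^\alpha M\subseteq M$, and some $Z\in M$ with $j``I\subseteq Z$ and $|Z|^M<j(\kappa)$. Since $j(I)$ is $j(\kappa)$-directed in $M$ by elementarity, $Z\cap j(I)$ admits an upper bound $d^*\in j(I)$, and in particular $d^*\geq j(i)$ for every $i\in I$. Setting $D^*:=j(\D)(d^*)$, elementarity applied to ``every vertex of $\D$ is a model of $T$'' yields $M\sat D^*\sat j(T)$; by Lemma~\ref{RedTMod}, together with the absoluteness of satisfaction of $\calL_\lambda(\Sigma)$-formulas from $M$ to $V$ (a consequence of $\lambda\leq\alpha$ and ${}^\alpha M\subseteq M$), I obtain $(D^*)_\Sigma\sat T$ in $V$.

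Next I would assemble a factorization relating $A$ and $(D^*)_\Sigma$. For each $i\in I$ set $g_i:=j(\D)(j(i)\to d^*):j(\D(i))\to D^*$; composing each $g_i$ with $j\restr\D(i)$ (a $\Sigma$-homomorphism by Lemma~\ref{elemnat}) and reducing via $\cdot_\Sigma$ yields a cocone on $\D$ in $\Str\Sigma$ with vertex $(D^*)_\Sigma$, so the universal property of $A$ produces a unique $\Sigma$-homomorphism $h:A\to(D^*)_\Sigma$. A diagram chase using naturality of $j\restr(\cdot)$ and uniqueness of induced arrows then gives the crucial identity
\[
(c^{j(A)}_{d^*})_\Sigma\of h \;=\; j\restr A,
\]
where $c^{j(A)}_{d^*}:D^*\to j(A)$ is the cocone component at $d^*$ of the colimit of $j(\D)$ in $\Str j(\Sigma)$ as computed inside $M$. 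Since $j\restr A$ is elementary (Lemma~\ref{elemnat} for $\calL_\omega$, the same argument together with ${}^\alpha M\subseteq M$ and $\lambda\leq\alpha$ extending this to $\calL_\lambda$), this factorization already forces $h$ to be injective and to reflect atomic formulas, hence to be a $\Sigma$-embedding.

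The hard part will be to upgrade this to the full $\calL_\lambda(\Sigma)$-elementarity of $h$ (or at least reflection of $T$-sentences). This is the elementary-embedding counterpart of {\L}o\'s's theorem for the ``compressed ultraproduct'' $D^*$, with the $\lambda$-completeness of the underlying fine ultrafilter playing out as the closure of $M$ under sequences of length less than $\lambda$; the argument should proceed by induction on formula complexity, using the $\kappa$-directedness of $\D$ to lift any $<\lambda$-tuple of parameters from $A$ to a common $\D(i)$ and then comparing satisfaction in $\D(i)$, in $(D^*)_\Sigma$, and in $j(A)_\Sigma$ via the factorization above. Once this elementarity is in hand, $(D^*)_\Sigma\sat T$ forces $A\sat T$, and Lemma~\ref{subcatcolimpres} then gives that $A$ is the colimit of $\D$ in $\Mod T$.
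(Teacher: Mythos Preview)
Your setup through the factorization $(c^{j(A)}_{d^*})_\Sigma\circ h = j\restr A$ is correct and matches the paper's construction of the central object (their $\bar A$, your $D^*$).  The problem is Step~6: the claimed upgrade of $h:A\to (D^*)_\Sigma$ to an $\calL_\lambda(\Sigma)$-elementary embedding cannot be carried out by the induction you sketch.  The morphisms of $\D$, the cocone maps $\D(i)\to A$, and the maps $j(\D(i))_\Sigma\to (D^*)_\Sigma$ are merely $\Sigma$-homomorphisms, not elementary, so ``comparing satisfaction in $\D(i)$, in $(D^*)_\Sigma$, and in $j(A)_\Sigma$'' breaks down the moment you hit a quantifier alternation.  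The \L o\'s analogy is misleading here: $j\restr A:A\to j(A)_\Sigma$ is indeed elementary, but $h$ is only a \emph{left factor} of it, and the right factor $(c^{j(A)}_{d^*})_\Sigma$ is again just a homomorphism.

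There is a structural reason this cannot be repaired as written: your argument never invokes the hypothesis that $\D$ has a colimit in $\Mod T$.  If your Step~6 worked, you would have proved $A\sat T$ unconditionally, contradicting Example~\ref{lowmax}.  The paper's proof uses that hypothesis in an essential way: letting $B=\Colim_{\Mod T}\D$ and $h:A\to B$ the comparison map, one builds the same bounding object $\bar A$ and obtains factorizations of \emph{both} $j\restr A$ and $j\restr B$ through $\bar A_\Sigma$, yielding morphisms $A\to B\to j(A)_\Sigma\to j(B)_\Sigma$.  Iterating $j$ produces two interleaved $\omega$-chains of elementary embeddings with common direct limit, whence $A\equiv B$ and in particular $A\sat T$.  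The key missing idea in your proposal is precisely this use of $B$ and the resulting elementary-chain (Richter-style) argument; elementarity of $h$ alone is neither provable nor needed.
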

Note that the assumption that a $\Mod T$ colimit does exist is important ---
see Example~\ref{lowmax} below.
\begin{proof}
Since $\Mod T$ is a subcategory of $\Str\Sigma$, we will freely consider 
$\D$ to be a diagram in either as the context requires.
Let $\delta^A:\D\to A$ be the colimit cocone to $A$ as colimit of $\D$ in
$\Str\Sigma$.  Let $B$ denote the colimit of $\D$ in $\Mod T$, 
and let $\delta^B:\D\to B$ denote the colimit cocone.
Since $\delta^B$
is in particular a $\Sigma$-structure cocone, there is a unique
$\Sigma$-structure homomorphism $h$ from $A$ to $B$ such that
$h\of\delta^A=\delta^B$.  

The proof starts by chasing around the following 
diagram, in which bold font denotes diagrams, and
double-stemmed arrows ($\implies$) are used for
cocones, natural transformations generally, and
the inclusion $(j``\D)_\Sigma$ to $j(\D)_\Sigma$.  
\begin{equation}
\xymatrix{
&A\ar[rrrr]^{j\restr A}\ar[drr]^{g_A}\ar[dd]^h&&&&j(A)_\Sigma\ar[dd]_{j(h)}&\\
&&&\bar A_\Sigma\ar[urr]^{j(\delta^A)_{\bar A}}
\ar[drr]_{j(\delta^B)_{\bar A}}&&&\\
&B\ar[urr]_{g_B}\ar[rrrr]_(0.35){j\restr B}|-\hole&&&&j(B)_\Sigma\\
\boldsymbol{\D}\ar@{=>}[uuur]^{\delta^A}\ar@{=>}[ur]_{\delta^B}
\ar@{=>}[rrr]^{(j\restr\cdot)}&&&
\boldsymbol{(j``\D)_\Sigma}\ar@{=>}[uu]_(0.3)\zeta
\ar@{=>}[rrr]^{\text{inclusion}}&&&
\boldsymbol{j(\D)_\Sigma}\ar@{=>}[uuul]_{j(\delta^A)}
\ar@{=>}[ul]^{j(\delta^B)}\\
}\tag{$*$}
\end{equation}

First note that $h$ is epi for homomorphisms to $T$-models.
That is, if $f,g:B\to C$ are homomorphisms with codomain $C$ a 
$T$-model such that $f\circ h=g\circ h$, then $f=g$.  
For, considering cocones, we have
$f\circ\delta^B=f\circ h\circ\delta^A=g\circ h\circ\delta^A=g\circ\delta^B$,
from which the uniqueness-of-factorisation property of $B$ as a colimit
gives $f=g$.

Let $\ga$ be 
the number of objects in $\D$.
Let $j:V\to M$ be an ultrapower embedding for an $\al$-complete 
ultrafilter over $\Power_\ka(\ga)$,
as in (\ref{BaMaGRelem}) of Theorem~\ref{alphasc},
so that the critical point of $j$ is at least $\alpha$, 
$j$ is definable in $V$, and there exists $Z\in M$ such that $j`` \gamma =\{ j(\beta):\beta <\gamma\}\subseteq Z$ and $M\models |Z|<j(\kappa)$.  
Moreover, $M$ is closed under $\al$-tuples, so by the same argument as in
the proof of Lemma~\ref{elemnat}, being a model of $T$ is absolute between 
$M$ and $V$, and for every $\Sigma$-structure $C$, $j\restr C$ is 
elementary from $C$ to $j(C)_\Sigma$.

Since $\D$ is a $\ka$-directed diagram of models of $T$, 
\[
M\sat j(\D)\text{ is a }j(\ka)\text{-directed diagram of models of }j(T). 
\]
The objects in $j(\D)$ are $j(\Sigma)$-structures, which 
may be thought of as $\Sigma$-structures using $\cdot_{\Sigma}$.  
%
Since $Z\in M$, $j``\gamma \subseteq Z$, and $M\sat |Z|<j(\ka)$, 
we have that for any subset $X$ of $M$
of size at most $\gamma$, there is a $Y\in M$ such that $Y\supseteq X$ and
$M\sat |Y|<j(\ka)$ (see for example Kanamori~\cite[Theorem 22.4]{Kan:THI}).
In particular, since $j``\D$ has cardinality $\ga$ (the same as $\D$),
there is a $Y\in M$ with $j``\D\subseteq Y$ and $M\sat |Y|<j(\ka)$.
Intersecting such a $Y$ with $j(\D)$ and applying $j(\ka)$-directedness in $M$,
we conclude that
there is an object $\bar A$ in $j(\D)$ such that there are 
$j(\Sigma)$-homomorphisms to $\bar A$
from every object in $j``\D$, yielding a cocone $\zeta$ from the 
subdiagram $(j``\D)_{\Sigma}$ of $j(\D)_{\Sigma}$ to $\bar A_{\Sigma}$
(note: $\zeta$ as a whole is not necessarily in $M$, although each
component homomorphism of it is).
Using Lemma~\ref{elemnat},
these maps compose with the natural transformation $(j\restr\cdot)$
to give a cocone from $\D$ to $\bar A_{\Sigma}$ in $V$. 
Using the colimit definition of $A$, 
let $g_A:A\to\bar A_\Sigma$ be 
the unique $\Sigma$-structure homomorphism such that
$g_A\of\delta^A=\zeta\of(j\restr\cdot):\D\to\bar A_\Sigma$.
Moreover, since $\bar A$ is in $j(\D)$, $\bar A$ is a model of $j(T)$, 
so $\bar A_{\Sigma}$ is a model of $T$, and hence there is likewise 
a unique homomorphism $g_B:B\to\bar A_{\Sigma}$ of models of $T$ such that
$g_B\of\delta^B=\zeta\of(j\restr\cdot)$.
By the uniqueness of factorisation through $\delta^A$, we have
$g_B\of h=g_A$.
Also, there is a $j(\Sigma)$-structure map from $\bar A$ to $j(A)$,
namely the $\bar A$ component $j(\delta^A)_{\bar A}$
of the colimit cocone $j(\delta^A)$ from
$j(\D)$ to $j(A)$ in the category of $\Sigma$-structures of $M$ 
(of course, this is the colimit cocone in $M$ by
elementarity).
Likewise, we have the $\bar A$ component 
$j(\delta^B)_{\bar A}:\bar A\to j(B)$
of the colimit cocone $j(\delta^B)$ from
$j(\D)$ to $j(B)$ in the category of $T$ models of $M$.
Applying $\cdot_\Sigma$, we get $\Sigma$-structure
maps from $\bar A_\Sigma$ to $j(A)_\Sigma$ and $j(B)_\Sigma$.

There are two maps from $A$ to $j(A)_\Sigma$ that arise naturally:
$j\restr A$, and the map that exists because $A$ is the colimit of $\D$,
induced by the cocone $j(\delta^A)\circ(j\restr\cdot)$.  By uniqueness, the
latter map equals $j(\delta^A)_{\bar A}\circ g_A$.
By considering the concrete construction of the colimits $A$ and $j(A)$,
we see that these two maps are in fact the same: an element of $A$
given as $[a]$, the equivalence class of an element $a$ of some 
$D_i\in \D$, must be mapped in each case to the element 
$[j(a)]\in j(A)$.

Similarly, consider $j\restr B$, and the colimit map from $B$ to $j(B)_\Sigma$
induced by the cocone $j(\delta^B)\circ(j\restr\cdot)$,
which equals $j(\delta^B)_{\bar A}\circ g_B$.
In this case we cannot appeal to a concrete construction of $B$ to show that
they are the same.  However, their respective compositions with $h$ are both
equal to $j(h)\circ j\restr A$: by Lemma~\ref{elemnat} 
(that is, by elementarity of $j$)
in the case of $j\restr B\circ h$, and
by uniqueness of the map from $A$ to $j(B)_\Sigma$ factorising 
the relevant cocone
in the case of $j(\delta^B)_{\bar A}\circ g_B\circ h$.
Since $h$ is epi for homomorphisms to $T$-models as noted above, 
and $j(B)_\Sigma$ is a model of $T$, it follows that 
$j\restr B=j(\delta^B)_{\bar A}\circ g_B$.

We may therefore conclude that diagram ($*$) above commutes.  
Applying $j$ repeatedly, we now get a commutative diagram
\[
\xymatrix{
A\ar[rr]^{j\restr A}\ar[dr]^h&&j(A)_\Sigma\ar[rr]^{j(j\restr A)}\ar[dr]^{j(h)}&&
j^2(A)_\Sigma\ar[rr]^{j^2(j\restr A)}\ar[dr]^{j^2(h)}&&\cdots\\
&B\ar[rr]_{j\restr B}\ar[ur]&&
j(B)_\Sigma\ar[rr]_{j(j\restr B)}\ar[ur]&&
j^2(B)_\Sigma\ar[ur]\ar[r]
&\cdots\\
}
\]
with the horizontal
mappings all elementary embeddings.
The direct limits of the top and bottom horizontal chains 
therefore give structures into which $A$ and $B$ respectively
embed elementarily.  
But since the chains interleave, the direct limits must be the same.
Therefore, the (complete) theory of $A$ is the same as that of $B$,
and in particular, $A$ is a model of $T$.
\end{proof}

The following example shows that the assumption that there is a colimit in
$\Mod T$ was necessary.

\begin{eg}\label{lowmax}
Let $\Sigma=\{<\}$, where as usual $<$ is a binary relation, 
and let $T$ be the theory of linear orders with a
maximum element.  Let $\D$ be the diagram whose objects are
all ordinals less than $\ka$, and whose morphisms are the usual inclusions.
Then $\D$ is $\ka$-directed, and its colimit in $\Str\Sigma$ is $\ka$,
but $\D$ has no colimit in $\Mod T$.  
\end{eg}
One might wonder if $\ka+1$ could
be a colimit for $\D$ in $\Mod T$ in Example~\ref{lowmax},
but since we are not naming the maximum element with a constant, the 
uniqueness property of colimits rules this out: for example, there are two
order preserving functions from $\ka+1$ to $\ka+2$ preserving the
inclusions of the ordinals less than $\ka$.

\section{Colimits of more general categories}\label{GenCatssection}

An important general notion in category theory encompassing many
categories of interest is that of an
\emph{accessible category}.
An accessible category is a category  that, for some regular cardinal
$\la$, has $\la$-directed colimits and a \emph{set} of nice
(specifically, $\la$-presentable) objects which generate the category
by $\la$-directed colimits --- see \cite[Chapter~2]{AdR:LPAC} for precise 
details.

A large cardinal axiom that has found great applicability in the 
study of accessible categories (see for example \cite[Chapter~6]{AdR:LPAC})
is the following.

\begin{defn}
\Vopenka's Principle (VP) is the statement that for every signature $\Sigma$
and every proper class $\calC$ of $\Sigma$-structures, there are two members
$A$ and $B$ of $\calC$ such that there exists a (non-identity) 
elementary embedding $j:A\to B$.
\end{defn}

Note that quantifying over proper classes is not permitted in standard 
ZFC set theory, so we treat VP as an axiom schema,
giving an axiom for each formula that defines a proper class.

In \cite{RTA:UPLPC} (see also \cite[Theorem~6.9]{AdR:LPAC}), 
Rosick\'y, Trnkov\'a and Ad\'amek 
prove the following colimit preservation theorem
for accessible categories.

\begin{thm}\label{AdRThm6.9}
Assuming Vop\v{e}nka's Principle, 
for each full embedding $F:\mathcal{A}\to \mathcal{K}$, where $\mathcal{K}$ is an accessible category, there exists a regular cardinal $\lambda$ such that $F$ preserves $\lambda$-directed colimits.
\end{thm}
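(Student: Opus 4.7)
My plan is to prove a refined version in the spirit of the paper's forthcoming Theorem~\ref{thm2}: assuming $\mathcal{A}$, $\mathcal{K}$, and $F$ admit definitions of complexity $\Sigma_n$ in the L\'evy hierarchy, a sufficiently large $C^{(n)}$-extendible cardinal $\ka$ serves as the required $\la$.  The plain statement of Theorem~\ref{AdRThm6.9} then follows from VP by taking $n$ large enough for whichever $F$, $\mathcal{A}$, $\mathcal{K}$ are given.

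First I would pin down the complexity and choose such a $\ka$, with all relevant parameters below it.  The claim to be proved is that $F$ preserves $\ka$-directed colimits; so fix a $\ka$-directed diagram $\D$ in $\mathcal{A}$ with colimit cocone $\delta\colon\D\to A$ in $\mathcal{A}$.  Since $\mathcal{K}$ is accessible it admits $\ka$-directed colimits, so let $\gamma\colon F\D\to C$ be the colimit cocone of $F\D$ in $\mathcal{K}$ and $h\colon C\to F(A)$ the canonical factorisation with $h\of\gamma=F\delta$.  It suffices to show $h$ is an isomorphism.

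Next I would invoke $C^{(n)}$-extendibility of $\ka$ to obtain an elementary embedding $j\colon V_\alpha\to V_\beta$ with $\crit(j)=\ka$, $j(\ka)>\alpha$, and $\alpha,j(\ka)\in C^{(n)}$, with $\alpha$ chosen large enough that $\D$, $\delta$, $A$, $\gamma$, $C$, $h$, $F$, $\mathcal{A}$ and $\mathcal{K}$ all lie in $V_\alpha$.  By elementarity and the $\Sigma_n$-absoluteness afforded by $C^{(n)}$-correctness, $j(\D)$ is a $j(\ka)$-directed diagram in $j(\mathcal{A})$ with colimit cocone $j(\delta)$, $j(\gamma)$ is the colimit cocone of $j(F)\of j(\D)$ in $j(\mathcal{K})$, and $j(h)$ is the corresponding canonical factorisation.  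Mirroring the geometric step in the proof of Theorem~\ref{ModStr}, since $|j``\D|=|\D|<\alpha<j(\ka)$, the $j(\ka)$-directedness of $j(\D)$ yields an object $\bar D\in j(\D)$ receiving a compatible cocone $\zeta\colon j``\D\to\bar D$.  Composing $\zeta$ with the natural transformation $(j\restr\cdot)$ of Lemma~\ref{elemnat} and, separately, with its image under $j(F)$, produces cocones $\D\to\bar D$ and $F\D\to j(F)(\bar D)$, which by the universal properties of $A$ and $C$ factor as $g_A\colon A\to\bar D$ and $g_C\colon C\to j(F)(\bar D)$.  The colimit component $j(\delta)_{\bar D}\colon\bar D\to j(A)$ and its image $j(F)(j(\delta)_{\bar D})\colon j(F)(\bar D)\to j(F)(j(A))=j(F(A))$ then complete a diagram analogous to diagram~$(*)$ in the proof of Theorem~\ref{ModStr}.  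A diagram chase using fullness of $F$ and uniqueness of factorisations through the colimits $A$ and $C$ shows this diagram commutes; iterating $j$ produces interleaving sequences of maps on the $C$-side and the $F(A)$-side whose direct limits must coincide, forcing $j(h)$ to be an isomorphism and hence, by elementarity, $h$ as well.

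The main obstacle is the complexity bookkeeping: the statements ``$C$ is the colimit of $F\D$ in $\mathcal{K}$'', ``$h$ is an isomorphism'', ``$\D$ is a $\ka$-directed diagram in $\mathcal{A}$'', and ``$F$ is a full embedding $\mathcal{A}\to\mathcal{K}$'' must all fit under the $\Sigma_n$ complexity bound so as to be absolute between $V_\alpha$, $V_\beta$ and $V$.  This is precisely what $C^{(n)}$-correctness of $\alpha$ and $j(\ka)$ ensures, and it is what determines the specific $n$ appearing in the refined theorem.
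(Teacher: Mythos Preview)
Your overall architecture matches the paper's proof of Theorem~\ref{thm2}, and the construction of the analogue of diagram~$(*)$ is correct.  The final step, however---``iterating $j$ produces interleaving sequences \ldots''---does not go through.  The embedding $j:V_\alpha\to V_\beta$ provided by $C^{(n)}$-extendibility is a \emph{set} map with domain $V_\alpha$; to form $j^2(C)=j(j(C))$ or $j(j\restr C)$ you would need $j(C)$ and $j\restr C$ to lie in $V_\alpha$, but they live in $V_\beta$, which properly contains $V_\alpha$.  The iteration in the proof of Theorem~\ref{ModStr} depends essentially on $j:V\to M$ being a class embedding defined on all of $V$, a feature of the $\alpha$-strongly compact ultrapower that extendibility embeddings simply lack.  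The paper flags exactly this point in the proof of Theorem~\ref{Delta1}: ``whilst we cannot use an elementary chains argument in this context.''  Moreover, even if iteration were available, the interleaving argument in Theorem~\ref{ModStr} yields only that $A$ and $B$ are elementarily equivalent as $\Sigma$-structures; it does not by itself show the comparison map is an isomorphism.

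The paper's replacement is a direct, concrete argument, and for it one first reduces to $\calK=\Str\Sigma$ via the characterisation of accessible categories as full subcategories of some $\Str\Sigma$, together with Lemma~\ref{subcatcolimpres}; you skip this reduction, but it is what makes the following work.  Once the comparison map $a:\bar A\to F(A)$ is a $\Sigma$-structure homomorphism, one argues inside $V_\mu$: the map $j\restr\bar A$ is injective and preserves complements of relations (by elementarity of $j$), and since diagram~$(*)$ exhibits it as factoring through $a$, the same is true of $a$; for surjectivity, any $\al\in F(A)\smallsetminus a``\bar A$ would by elementarity give $j(\al)\in F(j(A))\smallsetminus j(a)``j(\bar A)$, contradicting the factorisation $j(\al)=j(a)\circ j(\delta^{\bar A})_{F(d_0)}\circ F(g_A)(\al)$ read off from~$(*)$.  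This entirely replaces your chain argument.  One further bookkeeping point: the colimit statements are $\Pi_{n+1}$, not $\Sigma_n$, and to make them downward absolute to the \emph{target} of the embedding the paper passes from $C^{(n)}$- to $C^{(n)+}$-extendibility via Proposition~\ref{CnExtAboveEquiv}, ensuring $\mu\in C^{(n)}$; your $\beta$ is not placed in $C^{(n)}$, so your absoluteness claims between $V_\beta$ and $V$ are unjustified as stated.
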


The conclusion of Theorem~\ref{AdRThm6.9} is in fact equivalent to VP 
as shown by Example~6.12 of \cite{AdR:LPAC}.

In Theorem~\ref{thm2}
we prove a result that is simultaneously more general and
more refined, using appropriate  fragments of VP. 
Specifically, we use  \emph{$C^{(n)}$-extendible cardinals}.

We recall the \emph{L\'evy hierarchy} of formulas.
A formula is said to be $\Sigma_0$, $\Pi_0$, or $\Delta_0$ if it involves no
unbounded quantifiers.  
For $n>0$, a formula (and the notion it expresses)
is said to be $\Sigma_n$ if it is of the form $\exists x(\phi(x))$ for
some $\phi\in\Pi_{n-1}$, and $\Pi_n$ if it is of the form 
$\forall x(\phi(x))$ for some $\phi\in\Sigma_{n-1}$.  A notion is said
to be $\Delta_n$ if it can be expressed equivalently by a formula that
is $\Sigma_n$ or a formula that is $\Pi_n$.  
\label{Delta1Defn}
For example, if ZFC implies
that there will be a unique $x$ with some property $\phi$, then
$\exists x(\phi(x)\land\psi(x))$ and $\forall x(\phi(x)\implies\psi(x))$
will be equivalent.

Following \cite{Ba:CC}, we denote by $C^{(n)}$
the closed and unbounded proper class of cardinals $\lambda$ 
such that $V_\lambda$ is a $\Sigma_n$-elementary substructure of $V$,
that is, a $\Sigma_n$ statement is true in $V_\la$ if and only if
it is true in $V$. 
A cardinal $\kappa$ is called \emph{$C^{(n)}$-extendible} if for every 
$\lambda>\kappa$,  
there is an elementary embedding $j:V_\lambda \to V_\mu$ for some $\mu>\la$,
with critical point $\kappa$ and with $j(\kappa)$ a cardinal in $C^{(n)}$ 
greater than $\la$.
A natural strengthening of this is also considered: following
\cite{Ba:CC}, we say that a cardinal is \emph{$C^{(n)+}$-extendible} 
if for every $\la>\ka$ in $C^{(n)}$ there is is an elementary embedding
$j:V_\la\to V_\mu$ for some $\mu>\la$ in $C^{(n)}$, with $\crit(j)=\ka$
and $j(\ka)>\la$ in $C^{(n)}$ 
(this was actually the definition of $C^{(n)}$-extendible cardinal used in
\cite{BCMR:DOCACS}).
Note that every $C^{(n)+}$-extendible cardinal $\ka$ 
is $C^{(n)}$-extendible, as for
every ordinal $\xi>\ka$ there is a cardinal 
$\la$ greater than $\xi$ in $C^{(n)}$,
and if $j:V_\la\to V_\mu$ witnesses the $\la$-$C^{(n)+}$-extendibility of 
$\ka$, then $j\restr V_\xi$ witnesses the $\xi$-$C^{(n)}$-extendibility of 
$\ka$.
The two definitions are in fact even more closely related.

\begin{prop}\label{CnExtAboveEquiv}
For every cardinal $\al$, the statement 
``there is a $C^{(n)}$-extendible cardinal greater than $\al$''
is equivalent to
``there is a $C^{(n)+}$-extendible cardinal greater than $\al$''
\end{prop}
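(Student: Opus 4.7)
The $(\Leftarrow)$ direction is immediate from the paragraph preceding the proposition: every $C^{(n)+}$-extendible cardinal is $C^{(n)}$-extendible by restricting embeddings, so the existence statement passes from $C^{(n)+}$ to $C^{(n)}$. For the $(\Rightarrow)$ direction, the plan is to establish the stronger cardinal-by-cardinal fact that any $C^{(n)}$-extendible $\kappa$ is already itself $C^{(n)+}$-extendible; once this is done, equivalence of the existence statements above $\alpha$ is immediate.

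Fix a $C^{(n)}$-extendible $\kappa > \alpha$, and let $\lambda > \kappa$ with $\lambda \in C^{(n)}$. The goal is to produce an elementary embedding $j : V_\lambda \to V_\mu$ with $\crit(j) = \kappa$, $j(\kappa) > \lambda$, $j(\kappa) \in C^{(n)}$, and $\mu \in C^{(n)}$. The strategy is not to apply $C^{(n)}$-extendibility at $\lambda$ directly, but at a carefully chosen larger $\lambda^*$. Pick $\lambda^* > \lambda$ with $\lambda^* \in C^{(n+1)}$ (such $\lambda^*$ exist since $C^{(n+1)}$ is closed unbounded), and use $C^{(n)}$-extendibility to get $j^* : V_{\lambda^*} \to V_{\mu^*}$ with $\crit(j^*) = \kappa$, $j^*(\kappa) > \lambda^*$, and $j^*(\kappa) \in C^{(n)}$. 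Setting $j := j^* \restr V_\lambda$ yields an elementary embedding $V_\lambda \to V_{j^*(\lambda)}$ with the correct critical point, and $j(\kappa) = j^*(\kappa)$ has the required properties. What remains is to verify that the codomain ordinal $j^*(\lambda)$ lies in $C^{(n)}$.

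The key point is that ``$x \in C^{(n)}$'' is a $\Pi_{n+1}$ property of $x$ (quantifying over $\Sigma_n$-formulas via a truth predicate). Since $\lambda \in C^{(n)}$ holds in $V$ and $\lambda^* \in C^{(n+1)}$, this $\Pi_{n+1}$-fact reflects down to $V_{\lambda^*}$; applying the elementarity of $j^*$ yields $V_{\mu^*} \models j^*(\lambda) \in C^{(n)}$. The last step is to transfer this fact from $V_{\mu^*}$ back up to $V$, which requires $\mu^* = j^*(\lambda^*)$ to have enough reflection to $V$; this can be arranged by choosing $\lambda^*$ high enough in the $C^{(m)}$-hierarchy so that the elementarity of $j^*$ propagates the reflection property from $\lambda^*$ to $\mu^*$. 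The main obstacle is the Lévy-complexity bookkeeping: carefully tracking the complexity of ``$\cdot \in C^{(n)}$'' through each application of reflection and elementarity, and verifying that iterating this reflection does not require climbing the Lévy hierarchy without bound. An alternative streamlining uses the result from \cite{Ba:CC} that every $C^{(n)}$-extendible cardinal lies in $C^{(n+2)}$, which makes the reflection bookkeeping much more direct by providing ambient $\Sigma_{n+2}$-absoluteness up front.
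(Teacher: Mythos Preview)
Your plan is to prove the stronger cardinal-by-cardinal claim that every $C^{(n)}$-extendible $\kappa$ is itself $C^{(n)+}$-extendible. But the paper explicitly records this as an open problem immediately after the proposition, and the very next result in the paper only manages to show that a $C^{(n)}$-extendible cardinal is either $C^{(n)+}$-extendible \emph{or} a limit of such. So your proposed route, as stated, would settle a question the authors do not claim to resolve.

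The concrete gap is exactly where you flag it as ``bookkeeping'': after obtaining $V_{\mu^*}\models j^*(\lambda)\in C^{(n)}$, you need $\mu^*$ itself to be sufficiently correct (at least $\mu^*\in C^{(n)}$) to push this back up to $V$. The definition of $C^{(n)}$-extendibility gives you $j^*(\kappa)\in C^{(n)}$ but says nothing about $\mu^*$. Your suggestion to ``choose $\lambda^*$ high enough in the $C^{(m)}$-hierarchy'' does not help: the elementarity of $j^*$ is between $V_{\lambda^*}$ and $V_{\mu^*}$, not between either and $V$, so no amount of correctness of $\lambda^*$ transfers to correctness of $\mu^*$. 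Likewise, knowing $\kappa\in C^{(n+2)}$ constrains $V_\kappa$, not $V_{\mu^*}$. (Incidentally, ``$x\in C^{(n)}$'' is $\Pi_n$, not $\Pi_{n+1}$, though this does not affect the main obstruction.)

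The paper's proof avoids this entirely by taking an indirect route through \cite{Ba:CC}: from $\kappa$ being $C^{(n)}$-extendible one gets Vop\v{e}nka's Principle for $\Sigma_{n+2}$-definable classes with parameters in $V_\kappa$ (Theorem~4.11 there), and then the proof of Theorem~4.12 of \cite{Ba:CC} extracts, for every $\xi<\kappa$, a $C^{(n)+}$-extendible cardinal above $\xi$. This yields the existence statement above any $\alpha<\kappa$ without ever claiming that $\kappa$ itself is $C^{(n)+}$-extendible.
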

\begin{proof}
The proof is evident from a careful reading of \cite{Ba:CC}.
Theorem~4.11 of \cite{Ba:CC} shows that if $\ka$ is $C^{(n)}$-extendible then
\Vopenka's Principle holds for $\Sigma_{n+2}$-definable proper classes with 
parameters in $V_\ka$.  The proof of Theorem~4.12 of \cite{Ba:CC}, modified
as described in the remarks that follow it in that paper, then shows that
for every $\xi<\ka$, there is a $C^{(n)+}$-extendible cardinal greater than 
$\xi$.  Thus, ``$\ka$ is $C^{(n)}$-extendible'' implies that
``for all $\xi<\ka$, there is a $C^{(n)+}$-extendible cardinal greater than 
$\xi$'', which is clearly equivalent to the substantive direction of the 
Proposition.
\end{proof}

We can in fact strengthen this further.

\begin{prop}
If a cardinal is $C{(n)}$-extendible, then it is either 
$C^{(n)+}$-extendible, or it is a limit of $C^{(n)+}$-extendible cardinals.
\end{prop}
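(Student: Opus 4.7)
The plan is to bootstrap Proposition~\ref{CnExtAboveEquiv} by running its argument relative to $\ka$ itself. Suppose $\ka$ is $C^{(n)}$-extendible. Inspection of the proof of Proposition~\ref{CnExtAboveEquiv} shows that the single hypothesis ``$\ka$ is $C^{(n)}$-extendible'' already delivers, for every $\xi<\ka$, a $C^{(n)+}$-extendible cardinal strictly greater than $\xi$. Writing $E$ for the class of $C^{(n)+}$-extendibles, this says that $E\cap(\xi,\infty)\neq\emptyset$ for every $\xi<\ka$.

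Now split into cases. If $\sup(E\cap\ka)=\ka$, then $\ka$ is a limit of $C^{(n)+}$-extendibles and we are done. Otherwise set $\xi_0=\sup(E\cap\ka)<\ka$ and let $\tau$ be the least element of $E$ above $\xi_0$; then $\tau\geq\ka$, and the goal is to show $\tau=\ka$. I would argue by contradiction: assume $\tau>\ka$, and apply $\tau$'s $C^{(n)+}$-extendibility to some $\la>\tau$ with $\la\in C^{(n+2)}$, obtaining $j\colon V_\la\to V_\mu$ elementary with $\crit(j)=\tau$, $j(\tau)>\la$, and $\mu,j(\tau)\in C^{(n)}$. Since ``$\ka$ is $C^{(n)}$-extendible'' is a $\Pi_{n+2}$ assertion with parameter $\ka<\la$ and $\la\in C^{(n+2)}$, the statement holds in $V_\la$. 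Running the proof of Proposition~\ref{CnExtAboveEquiv} inside $V_\la$ with parameter $\xi_0$ then produces some $\eta\in(\xi_0,\la)$ that $V_\la$ regards as $C^{(n)+}$-extendible. Provided this internal notion coincides with the external one, $\eta\in E$; since $\eta>\xi_0$, either $\eta<\ka$ (contradicting $\xi_0=\sup(E\cap\ka)$) or $\ka\leq\eta<\tau$ (contradicting minimality of $\tau$), which is the desired contradiction.

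The main obstacle is precisely that absoluteness claim. Because $\la\in C^{(n)}$ we already have $(C^{(n)})^{V_\la}=C^{(n)}\cap\la$, so the $C^{(n)}$ side-conditions in the definition of $C^{(n)+}$-extendibility are absolute, and any elementary embedding $V_{\la'}\to V_{\mu'}$ with $\la',\mu'<\la$ is itself an element of $V_\la$. What remains is to check that the L\'evy complexity of the full $C^{(n)+}$-extendibility statement is low enough to be reflected by $\la\in C^{(n+2)}$ --- essentially the same bookkeeping that underlies the proofs of Theorems~4.11 and~4.12 of \cite{Ba:CC} --- so that after reflection $\eta$ is a bona fide $C^{(n)+}$-extendible cardinal. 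If this complexity accounting can be arranged, the case analysis above closes the argument.
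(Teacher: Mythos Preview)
Your overall strategy matches the paper's, but there is a genuine gap in the final case analysis, and it stems from a suboptimal choice of the reflecting level.

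You pick $\la>\tau$ with $\la\in C^{(n+2)}$ and then, after reflecting and running Proposition~\ref{CnExtAboveEquiv} inside $V_\la$, you obtain some $\eta\in(\xi_0,\la)$ with $\eta\in E$. Your case split then only covers $\eta<\ka$ and $\ka\leq\eta<\tau$. But since $\tau<\la$, the case $\tau\leq\eta<\la$ is also possible, and it yields no contradiction whatsoever --- indeed $\tau$ itself is an element of $E$ lying in $(\xi_0,\la)$. So the argument as written does not close. A secondary issue is that merely having $\la\in C^{(n+2)}$ does not guarantee that $V_\la$ satisfies enough of ZFC to run the proof of Proposition~\ref{CnExtAboveEquiv} internally; you need $\la$ inaccessible for that. (Also, the embedding $j$ you extract from $\tau$'s $C^{(n)+}$-extendibility is never used; it is a red herring.)

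The paper repairs both problems with a single move: instead of taking some $\la>\tau$, it works in $V_\tau$ itself. Since $\tau$ is $C^{(n)+}$-extendible, it is $C^{(n)}$-extendible, hence lies in $C^{(n+2)}$ \emph{and} is inaccessible; thus $V_\tau\models\mathrm{ZFC}$ and the reflection of $\Pi_{n+2}$ statements goes through. Now any $\eta$ produced inside $V_\tau$ automatically satisfies $\eta<\tau$, so your intended case analysis becomes exhaustive. The paper then additionally reflects down the fact that no $\zeta\in(\xi_0,\tau)$ with $\zeta\neq\ka$ is $C^{(n)+}$-extendible (this is $\Sigma_{n+2}$ for each $\zeta$), forcing $\eta=\ka$ directly; one then reflects ``$\ka$ is $C^{(n)+}$-extendible'' back up from $V_\tau$ to $V$ by $\Sigma_{n+2}$-correctness.
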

\begin{proof}
Suppose $\ka$ is $C^{(n)}$-extendible but not the limit of
$C^{(n)+}$-extendible cardinals.  Then there is some $\xi<\ka$ such that
for all $\zeta$ strictly between $\xi$ and $\ka$, $\zeta$ is not
$C^{(n)+}$-extendible.  By Proposition~\ref{CnExtAboveEquiv}, there is
a $C^{(n)+}$-extendible cardinal $\la$ greater than $\xi$, and hence greater
than or equal to $\ka$.  If $\la=\ka$ we are done, so suppose $\la>\ka$.
We may assume without loss of generality that $\la$ is the \emph{least}
$C^{(n)+}$-extendible cardinal strictly greater than $\ka$.
Since $\la$ is $C^{(n)}$-extendible it is in $C^{(n+2)}$
(\cite[Proposition~3.4]{Ba:CC}), and so since 
``$\ka$ is $C^{(n)}$-extendible'' and
``$\ka$ is $C^{(n)+}$-extendible'' 
are $\Pi_{n+2}$ statements
(again, see \cite{Ba:CC}), we have that
\[
V_\la\sat\ka\text{ is }C^{(n)}\text{-extendible}
\land\forall\zeta
\left((\zeta>\xi\land\zeta\neq\ka)\implies\zeta
\text{ is not }C^{(n)+}\text{-extendible}\right).
\]
But now $\la$ is also inaccessible, so full ZFC holds in $V_\la$, and in 
particular Proposition~\ref{CnExtAboveEquiv}.
Thus we may deduce that that 
$V_\la\sat\ka\text{ is }C^{(n)+}\text{-extendible}$,
whence by $\Sigma_{n+2}$-correctness of $V_\la$ again we have
that $\ka$ is $C^{(n)+}$-extendible.
\end{proof}

This of course raises a natural question.

\begin{open}  Is it consistent to have a $C^{(n)}$-extendible cardinal that
is not $C^{(n)+}$-extendible?
\end{open}

It is shown in \cite{Ba:CC} that VP is equivalent to the existence of a proper class of $C^{(n)}$-extendible cardinals for every $n$.
Moreover this is a precise stratification, with the existence of 
a $C^{(n)}$-extendible cardinal $\ka$ corresponding to 
VP for $\Sigma_{n+2}$-definable classes with parameters in $V_\ka$.
We now show that this same stratification is applicable to
Theorem~\ref{AdRThm6.9}.
We also extend the scope of the theorem to a wider range of categories $\calK$,
noting that every accessible category may be embedded as a full subcategory
of $\Str\Sigma$ 
(see for example \cite[Characterization Theorem~5.35]{AdR:LPAC}).

We use the convention of \cite{BCMR:DOCACS}, calling a category $\calC$
$\Sigma_n$ definable if there is a $\Sigma_n$ formula $\phi(x_1,\ldots,x_8)$
(in the language of set theory) 
and a parameter $p$ such that
$\phi(A,B,C,f,g,h,i,p)$
holds if and only if
$A, B$ and $C$ are objects of $\calC$,
$f\in\calC(A,B)$,
$g\in\calC(B,C)$,
$h\in\calC(A,C)$,
$h=g\of f$ and $i=\Id_A$.
Note that from such a $\phi$, formulas may be obtained for $\Obj(\calC)$,
$\Mor(\calC)$, $\circ$ and $\Id$, with the only extra quantification an
$\exists i$ for some of them.
We say that a functor $F:\calC_0\to \calC_1$ is $\Sigma_n$ definable 
if there are
$\Sigma_n$ formulas $\phi^F_{Obj}(x,y)$ and $\phi^F_{Mor}(x,y)$ such that 
for any object $A$ and morphism $f$ of $\calC_0$,
$\phi^F_{Obj}(A,B)$ holds if and only if $B=F(A)$, and
$\phi^F_{Mor}(f,g)$ holds if and only if $g=F(f)$.

An annoying quirk of using infinitary languages is that $\Str\Sigma$ need not
be absolute: for example, a function defined on all countably infinite tuples 
from a set $X$ in some set theoretic universe will
no longer be defined on all countably infinite tuples of $X$
if we move to a universe
with more countably infinite subsets of $X$.   
However, this obstacle, generalised 
to arbitrary infinite cardinalities, is the only obstruction to absoluteness.
\begin{prop}
Let $\Sigma$ be a signature.  If $\Sigma$ contains no infinitary 
function symbols, then $X\in\Obj(\Str\Sigma)$ is $\Delta_1$ definable
with $\Sigma$ as a parameter; 
otherwise, $X\in\Obj(\Str\Sigma)$ is $\Pi_1$ definable with parameter $\Sigma$.
Moreover, if for some $\ka$ greater than the arities of all the function
symbols in $\Sigma$, we add the function $\Power_\ka$ to the language
of set theory (where $\Power_\ka(X)$ denotes the set of all subsets of $X$ of
cardinality less than $\ka$), 
then $X\in\Obj(\Str\Sigma)$ is $\Delta_0$ definable for this
extended language (again with $\Sigma$ as a parameter).
\end{prop}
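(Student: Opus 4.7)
The plan is to unpack the definition of $X \in \Obj(\Str\Sigma)$ and track the quantifier complexity of each ingredient. By definition, $X$ is a $\Sigma$-structure exactly when $X = \langle A, \calI\rangle$ for some set $A$ and some function $\calI$ with domain $\Sigma$, satisfying that $\calI(R) \subseteq A^{\alpha_R}$ for every relation symbol $R \in \Sigma$ of arity $\alpha_R$, and that $\calI(f) : A^{\alpha_f} \to A$ for every function symbol $f \in \Sigma$ of arity $\alpha_f$. Since $\Sigma$ is a parameter, the two ``for every symbol of $\Sigma$'' conjunctions translate to bounded quantifiers over $\Sigma$, so it suffices to analyse each individual constraint.

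The key observation is that ``$r$ is a function from $\alpha$ to $A$'' is $\Delta_0$ in $r,\alpha,A$: it says that $r$ is a functional set of ordered pairs whose first coordinates exhaust $\alpha$ and whose second coordinates lie in $A$, all of which can be expressed using only bounded quantifiers. Consequently $\calI(R) \subseteq A^{\alpha_R}$ is $\Delta_0$ whatever the arity $\alpha_R$. For function symbols $f$, the statement that $\calI(f)$ is a functional relation with range in $A$ and domain \emph{contained in} $A^{\alpha_f}$ is similarly $\Delta_0$; the only remaining ingredient is \emph{totality} on $A^{\alpha_f}$, namely $\forall g\,(g\text{ is a function }\alpha_f \to A \implies g \in \operatorname{dom}(\calI(f)))$, which in general is $\Pi_1$. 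When $\alpha_f$ is finite, however, $A^{\alpha_f}$ is explicitly constructible from $A$ by finitely many Cartesian products, so the universal quantifier may be bounded and totality is $\Delta_0$ too.

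Combining these pieces yields the first two clauses. If $\Sigma$ contains no infinitary function symbols, every constraint is $\Delta_0$ and so $X \in \Obj(\Str\Sigma)$ is defined by a $\Delta_0$-condition with $\Sigma$ as a parameter, which is in particular $\Delta_1$. If $\Sigma$ does contain infinitary function symbols, the totality conditions for those interpretations are the sole remaining source of unbounded quantification, giving an overall $\Pi_1$ definition. For the ``moreover'' clause, since $\ka$ strictly exceeds every function-symbol arity, any candidate $g : \alpha \to A$ for such an $\alpha$ satisfies $|g| \le |\alpha| < \ka$, so $g \in \Power_\ka(\alpha \times A)$; the troublesome quantifier may therefore be replaced by the bounded $\forall g \in \Power_\ka(\alpha \times A)$, which is $\Delta_0$ in the extended language, making every constraint $\Delta_0$ there. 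There is no substantive obstacle in this argument beyond careful bookkeeping; the conceptual point worth emphasising is that, uniformly across arities, the sole source of unbounded quantification in checking $X \in \Obj(\Str\Sigma)$ is the assertion that each function interpretation is \emph{total}.
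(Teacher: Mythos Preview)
Your overall strategy matches the paper's: unpack the definition, observe that checking $\calI(R)\subseteq A^{\alpha_R}$ is $\Delta_0$ for relation symbols of any arity, and isolate the totality of function interpretations as the one place where unbounded quantification enters. That diagnosis is exactly right and is the conceptual heart of the proof.

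There is, however, a genuine error in your treatment of the finitary case. You claim that when $\alpha_f$ is finite, ``$A^{\alpha_f}$ is explicitly constructible from $A$ by finitely many Cartesian products, so the universal quantifier may be bounded and totality is $\Delta_0$.'' This does not work: Cartesian product is not a term-forming operation in the language $\{\in\}$, so $A^{\alpha_f}$ is not available as a bound for a quantifier. Even for a single fixed $n$, writing $\forall g\in A^n(\ldots)$ already requires introducing $A^n$ via an unbounded $\exists$ or $\forall$, which is $\Sigma_1$ or $\Pi_1$ respectively, not $\Delta_0$. Worse, $\alpha_f$ varies with $f$ inside the bounded quantifier $\forall f\in\Sigma$, so you cannot write out a fixed finite iteration of products at all. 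The paper handles this correctly by noting that a single set containing all the relevant finite tuples --- essentially $\Power_{\aleph_0}(A)$ --- is $\Delta_1$-definable from $A$ (this is the well-known fact that the hereditarily finite superstructure over $A$ is $\Delta_1$); using it as a bound then yields a $\Delta_1$ formula, not $\Delta_0$. Your conclusion $\Delta_1$ is thus correct, but it is reached by an overclaim rather than a valid argument.

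A smaller point of the same flavour: in the ``moreover'' clause you bound by $\Power_\ka(\alpha\times A)$, but $\alpha\times A$ is again not a term. This is easily repaired by nesting $\Power_\ka$ a few times over a legitimate term (e.g.\ something built from $A$ and $\alpha$, or over the transitive closure sitting inside the structure), and the paper is similarly informal here; but you should be aware that the bound as written is not literally available.
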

The point is that $\Delta_1$ definability implies absoluteness between
models of set theory,
whereas $\Pi_1$ definability only implies downward absoluteness.
The second part of the Proposition tells us that we have absoluteness
of $\Obj(\Str\Sigma)$
between models of set theory that agree about the function $\Power_\ka$.
\begin{proof}
With the precise definition of the notion of a $\Sigma$-structure from
Section~\ref{preliminaries},
it is straightforward to show that
$A\in\Obj(\Str\Sigma)$
is $\Delta_0$ in the parameters $\Sigma$ and $\Power_\ka(A)$, where
$\ka$ is greater than all of the arities of symbols in $\Sigma$.
It is well known that $\Power_{\aleph_0}(A)$ is $\Delta_1$ definable from $A$,
but for greater $\ka$ it is $\Pi_1$ definable with parameter $\ka$; 
since it is unique, adding
its definition to the formula makes the expression $\Delta_1$ in the first case
(see the comment on page \pageref{Delta1Defn} where we defined $\Delta_1$)
and $\Pi_1$ in the second.  Moreover,
for each $\la$-ary relation symbol $R$, 
one just needs to verify that each element of $R^{A}$ is a function
from $\la$ to $A$,
and so rather than $\Power_{\la^+}(A)$ as a parameter
it suffices for the definition to just have $\la$, which is
recoverable from $R$ itself.
\end{proof}

Note that the initial segments $V_\la$ of $V$, which are relevant to
$C^{(n)}$-extendible and $C^{(n)+}$-extendible cardinals, 
are \emph{correct} for $\Power_\ka$:
for any $X$ in $V_\la$ for $\la$ a limit ordinal, $V_\la$ contains every
subset of $X$ of cardinality less than $\ka$ (and indeed, every subset of $X$ 
of any cardinality), and so $V_\la$ agrees with $V$ about the function 
$\Power_\ka$.  Thus, a $C^{(n)}$-extendible cardinal $\ka$ has
embeddings witnessing its $C^{(n)}$-extendibility that are
actually elementary for formulas in the language of set theory extended by 
$\Power_\ka$ (and likewise for $C^{(n)+}$-extendibility).

We claimed above that Theorem~\ref{thm2} is more general than
Theorem~\ref{AdRThm6.9}.  Certainly in a ZFC setting every category 
(indeed every class) is definable, 
and so $\Sigma_n$-definable for some $n$.
In Theorem~\ref{thm2} we require $\calK$ to be a full subcategory of 
$\Str\Sigma$ for some signature $\Sigma$, but it turns out that this still
allows a vast array of categories, including all accessible ones:
we have the following characterisation of accessible categories
(see \cite[Theorem~5.35]{AdR:LPAC}).

\begin{thm}
Accessible categories are precisely the categories equivalent to categories of 
models of \emph{basic} theories, that is, those whose formulas are
universally quantified implications of positive-existential formulas.
\end{thm}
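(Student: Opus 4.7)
The plan is to prove both directions of the equivalence separately, with the direction from accessible categories to basic theories being substantially the harder one. Throughout, a preliminary step is to fix what one means by ``equivalence'' in the case of $\la$-accessible categories for various $\la$; the natural choice is that the signature and language $\calL_\la(\Sigma)$ used on the basic-theory side are calibrated to the accessibility rank on the category side.

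For the easy direction, suppose $T$ is a basic theory in $\calL_\la(\Sigma)$ over a $\la$-ary signature $\Sigma$. Choose a regular cardinal $\mu$ with $\mu\ge\la$ and $\mu>|\Sigma|$, and show that $\Mod T$ is $\mu$-accessible. The first step is that the forgetful functor $\Mod T\to\Str\Sigma$ creates $\mu$-directed colimits: positive-existential formulas are preserved under $\mu$-directed colimits in $\Str\Sigma$ (witnesses push along the cocone, and since every tuple in the colimit lives in some stage of the diagram, satisfaction of a positive-existential formula in the colimit implies satisfaction at some stage), and this propagates through the universally quantified implications constituting $T$. The second step is a L\"owenheim--Skolem-style argument: every model of $T$ is the $\mu$-directed colimit of its $\mu$-small subsets closed under witnesses for the existential parts of the axioms, and these $\mu$-small witness-closed submodels are themselves models of $T$, giving a set (up to isomorphism) of $\mu$-presentable generators.

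For the harder direction, let $\calK$ be $\la$-accessible with small set $\calA$ of $\la$-presentable generators. The plan is to use the canonical embedding $E:\calK\to\Set^{\calA^{op}}$ given by $X\mapsto\calK(-,X)\restr\calA$, which is fully faithful, and to axiomatise its essential image. Encode $\Set^{\calA^{op}}$ as $\Str\Sigma$ for a $\la$-ary signature $\Sigma$ having a unary predicate for each object of $\calA$ (implementing sorts) and a function symbol for each morphism of $\calA$, together with universally quantified atomic equations imposing functoriality. Then characterise the image of $E$: a functor in $\Set^{\calA^{op}}$ lies in the image (up to isomorphism) if and only if it is a $\la$-filtered colimit of representables, equivalently a $\la$-flat functor. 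The key observation is that $\la$-flatness admits an axiomatisation by formulas of the shape $\forall\mathbf{x}\bigl(\phi(\mathbf{x})\to\exists\mathbf{y}\,\psi(\mathbf{x},\mathbf{y})\bigr)$, with $\phi$ and $\psi$ positive-existential, since it amounts to saying that every partial cone over a $\la$-small diagram in $\calA$ can be completed inside the functor; this is a basic theory.

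The main obstacle is executing the last step cleanly: one has to ensure that $\la$-flatness really is expressible within the positive-existential fragment, and that the resulting theory is small enough for $\Mod T$ to be genuinely accessible. A secondary subtlety is that infinitary arities of function symbols in $\Sigma$ (forced when $\la>\omega$) require care, but the framework of $\calL_\la(\Sigma)$ over $\la$-ary signatures is designed precisely to accommodate this. Once $\Mod T\simeq\calK$ has been established via $E$, combining with the easy direction closes the equivalence.
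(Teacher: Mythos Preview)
The paper does not prove this theorem at all: it is stated as a known characterisation and attributed to \cite[Theorem~5.35]{AdR:LPAC}, with no argument given. So there is no ``paper's own proof'' to compare against.

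Your sketch is broadly the standard argument found in Ad\'amek--Rosick\'y. The easy direction is essentially right: preservation of positive-existential formulas under $\mu$-directed colimits, plus a downward L\"owenheim--Skolem argument to produce a generating set of $\mu$-presentable objects. For the hard direction, the canonical embedding into $\Set^{\calA^{op}}$ and axiomatisation of the essential image via $\la$-flatness is the right shape, and your observation that $\la$-flatness unpacks into basic sentences is the crux. A couple of points that would need tightening in a full write-up: you should be explicit that the resulting theory is a \emph{set} (it is, since $\calA$ is small and one only needs one axiom schema per $\la$-small diagram shape in $\calA$), and the claim that the $\mu$-small witness-closed submodels are $\mu$-presentable in $\Mod T$ (not merely in $\Str\Sigma$) deserves a word, since presentability is relative to the ambient category. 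But these are refinements, not gaps; the strategy is sound and matches the cited reference.
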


\begin{thm}
\label{thm2}
Suppose that $n>0$, $\mathcal{K}$ is a 
full subcategory of $\Str\Sigma$ for some 
signature $\Sigma$, and $F:\mathcal{A}\to \mathcal{K}$ is a 
$\Sigma_n$ definable full embedding with $\Sigma_n$ definable domain
category $\calA$. 
If there exists  a $C^{(n)}$-extendible cardinal 
greater than the rank of $\Sigma$, 
the arity of each function or relation symbol in $\Sigma$, 
and the rank of the parameters involved in some $\Sigma_n$ definitions of  
$F$ and $\calA$ and some definition of $\calK$, 
then there exists a regular cardinal $\lambda$ such that $F$ preserves $\lambda$-directed colimits.
\end{thm}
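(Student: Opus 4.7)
I will prove the stronger statement that $F$ preserves $\ka$-directed colimits, where $\ka$ is the given $C^{(n)}$-extendible cardinal (which, by Proposition~\ref{CnExtAboveEquiv}, may be taken $C^{(n)+}$-extendible). Suppose for contradiction that $F$ fails to preserve some $\ka$-directed colimit: there is a $\ka$-directed diagram $\D$ in $\calA$ with colimit cocone $\delta:\D\to A$ in $\calA$, and a competing cocone $\gamma:F\of\D\to C$ in $\calK$ that either admits no factorization through $F(\delta)$ or admits more than one. The broad strategy mirrors that of Theorem~\ref{ModStr}: use an elementary embedding to produce a universal absorbing object within the image of the diagram, then chase a diagram analogous to ($*$) to identify $F(A)$ with the colimit of $F\of\D$ in $\calK$.

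\textbf{Setting up the embedding.} I would pick a cardinal $\theta\in C^{(n)}$ large enough that $\D, A, \delta, \gamma, C, \Sigma$, and the defining parameters of $\calA, \calK, F$ all lie in $V_\theta$; this is possible by the rank hypothesis of the theorem and the unboundedness of $C^{(n)}$. Apply $C^{(n)+}$-extendibility to obtain an elementary embedding $j:V_\theta\to V_\mu$ with $\crit(j)=\ka$, $j(\ka)>\theta$, and $\mu\in C^{(n)}$. Since $\ka$ exceeds the ranks of $\Sigma$ and of the defining parameters, $j$ fixes all of them, so the definitions of $\calA$, $\calK$, and $F$ inside $V_\theta$ and $V_\mu$ coincide with the definitions in $V$; by $\Sigma_n$-correctness of $V_\theta,V_\mu\in C^{(n)}$, all the pertinent categorical data transfer cleanly through $j$.

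\textbf{Producing the absorbing object.} Working inside $V_\mu$, $j(\D)$ is a $j(\ka)$-directed diagram in $\calA$, and $|j``\D|\le|\D|<j(\ka)$, so directedness produces an object $\bar A$ of $j(\D)$ receiving an $\calA$-morphism $\mu_D:j(D)\to\bar A$ for every object $D$ of $\D$. Applying $F$ yields $\calK$-morphisms $F(\mu_D):j(F(D))\to F(\bar A)$; composing these with the components $j\restr F(D)$ of the natural transformation from Lemma~\ref{elemnat} (applied to the $\Sigma$-structure diagram $F\of\D$) gives a cocone $\eta:F\of\D\to F(\bar A)$ in $\Str\Sigma$, which lies in $\calK$ since $\calK$ is full in $\Str\Sigma$ and $F(\bar A)\in\calK$. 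The colimit universality of $A$ in $\calA$ then provides a unique $\calA$-morphism $g:A\to\bar A$ with $F(g)\of F(\delta)=\eta$, and $F(g):F(A)\to F(\bar A)$ becomes the handle connecting $F(A)$ to the data in $V_\mu$.

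\textbf{The main obstacle.} The crux is to parlay this absorbing data into a proof that the competing cocone $\gamma:F\of\D\to C$ admits a unique factorization through $F(\delta)$. My plan is to exploit the $\bar A$-component $j(\gamma)_{\bar A}:F(\bar A)\to j(C)$, which by the cocone condition in $V_\mu$ satisfies $j(\gamma)_{\bar A}\of F(\mu_D)=j(\gamma)_{j(D)}=j(\gamma_D)$; composing with $j\restr F(D)$ and using naturality of $j\restr\cdot$ one obtains $j(\gamma)_{\bar A}\of F(g)\of F(\delta_D)=(j\restr C)\of\gamma_D$ for each $D$, assembling a commutative diagram analogous to ($*$) of Theorem~\ref{ModStr}. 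Elementarity of $j:V_\theta\to V_\mu$ then reflects the existence of a factorization of $\gamma$ through $F(\delta)$ back down to $V_\theta$, and hence to $V$; the uniqueness case is handled in parallel, using faithfulness of $F$ to push any two distinct factorizations back to two $\calA$-morphisms contradicting the colimit property of $A$. The delicate point throughout, and the reason for the rank bounds in the hypothesis, is complexity bookkeeping: every categorical assertion used, including ``$h$ factors $\gamma$ through $F(\delta)$'' and ``$F$ is full and faithful'', must stay within the $\Sigma_n$ budget afforded by $C^{(n)}$-correctness of $V_\theta$ and $V_\mu$, and reflecting a morphism with codomain $j(C)$ back to one with codomain $C$ is exactly where this correctness must be used most carefully.
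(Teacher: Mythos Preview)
Your setup and the absorbing-object construction are fine (though note that to obtain $g:A\to\bar A$ in $\calA$ from the cocone $\eta$, which lives only in $\calK$, you must first invoke fullness of $F$ to pull $\eta$ back to a cocone $\D\to\bar A$ in $\calA$ before applying the universal property of $A$; you never say this). The genuine gap is the reflection step. What your diagram chase actually produces is $h=j(\gamma)_{\bar A}\circ F(g):F(A)\to j(C)$ satisfying $h\circ F(\delta_D)=(j\restr C)\circ\gamma_D$; this factors the cocone $(j\restr C)\circ\gamma$, not $\gamma$, and its codomain is $j(C)$, not $C$. Elementarity of $j:V_\theta\to V_\mu$ relates $V_\theta\sat\psi(x)$ to $V_\mu\sat\psi(j(x))$; reflecting ``$\gamma$ factors through $F(\delta)$'' would therefore require $V_\mu\sat\exists h'\,\big(h'\circ j(F(\delta))=j(\gamma)\big)$ with $h':j(F(A))\to j(C)$, which is not what you have established. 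There is no general way to convert a morphism into $j(C)$ into one into $C$, since $j\restr C$ need not split. Your uniqueness argument has the same defect: $C$ need not be of the form $F(B)$ for any $B\in\calA$, so faithfulness of $F$ cannot pull two factorizations $F(A)\to C$ back to $\calA$-morphisms.

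The paper sidesteps this by never testing against an arbitrary cocone. It reduces via Lemma~\ref{subcatcolimpres} to $\calK=\Str\Sigma$, writes $\bar A$ for the concrete $\Str\Sigma$-colimit (direct limit) of $F\D$, and shows the canonical comparison $a:\bar A\to F(A)$ is an isomorphism by element-level reasoning: since $j\restr\bar A$ factors through $a$ and is injective and relation-reflecting, so is $a$; and if some $\alpha\in F(A)$ were outside the image of $a$, elementarity would place $j(\alpha)$ outside $j(a)``j(\bar A)$, contradicting the explicit factorization of $j(\alpha)$ through $F(d_0)$. Once $a$ is an isomorphism, $F(A)$ inherits the universal property of $\bar A$ automatically, and no reflection of morphisms with codomain $j(C)$ is ever needed. (Incidentally, this argument, run for a single $\ka$-directed $\D$, does deliver your stronger conclusion that $\la=\ka$ works, even though the paper only states the existence of some $\la$.)
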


\begin{proof}
By Lemma~\ref{subcatcolimpres} it suffices to show that the embedding
$i\of F:\calA\to\Str\Sigma$ preserves $\lambda$-directed colimits for some
regular cardinal $\la$, where $i$ is the inclusion functor
from $\calK$ to $\Str\Sigma$. 
Note that if $F$ is $\Sigma_n$ definable as a functor to $\calK$ then
it remains so as a functor to $\Str\Sigma$, that is, 
$i\of F$ is $\Sigma_n$ definable.
Thus, let us assume without loss of generality that $\calK=\Str\Sigma$.
In particular, we may use the fact that
$\Str\Sigma$ has all $\la$-directed colimits for
$\la$ greater than all of the arities of symbols in $\Sigma$.

For terminological convenience let $\beta$ be a cardinal greater than  
the rank of $\Sigma$, the arity of each of the symbols in $\Sigma$, and the
rank of the parameters involved in the definitions of $F$ and $\calA$.
Let  $\mathcal{C}$ be the category  whose objects are maps $a:\bar{A}\to F(A)$,
where, for some regular cardinal $\lambda>\beta$
and some  $\lambda$-directed diagram $\D$ in $\mathcal{A}$, 
$A$ is a colimit in $\mathcal{A}$ of $\D$, 
$\bar{A}$ is a colimit of $F\D$ in $\K$, and $a$ is the homomorphism
induced by the image of the $\calA$-colimit cocone under $F$.
The morphisms $f:a \to b$ are pairs $f=\langle g,h\rangle$, with $g\in Hom_{\K}(\bar{A} , \bar{B})$ and $h\in Hom_{\K}(F(A) ,F(B))$,  such that the following diagram commutes:
\[
\xymatrixcolsep{3pc}
\xymatrix{
\bar{A} \ar[r]^{a}\ar[d]_g&F(A)\ar[d]^{h}\\
\bar{B}\ar[r]_{b}        &F(B).}
\]
Let $\mathcal{C}^\ast$ be the full subcategory of $\mathcal{C}$ 
whose objects are those $a\in\Obj(\mathcal{C})$ that are not isomorphisms. 
If  the conclusion of the theorem fails, that is, if for every regular cardinal $\lambda$, some $\lambda$-directed colimit is not preserved by $F$, 
then  
even up to isomorphism $\calC^*$ has a proper class of objects:
in the terminology of \cite[Section~0.1]{AdR:LPAC}, $\calC^*$ is not 
essentially small.

We claim that membership in $\Obj({\calC^*})$ is $\Sigma_{n+2}$ definable
over the language of set theory extended by $\Power_\beta$. 
We have: $a\in\Obj({\mathcal{C}^\ast})$ if and only if
\begin{align*}
\exists \lambda \exists \D\exists\langle\bar{A},\bar{\eta}\rangle
\exists\langle A,\eta\rangle(&
\lambda\mbox{ is a regular cardinal }\land
\D\text{ is a diagram in }\mathcal{A}\,\land\\
&\D\text{ is $\la$-directed }\land\\
&\langle\bar{A},\bar\eta\rangle=\Colim_{\K}(F\D) \wedge
\langle A,\eta\rangle=\Colim_{\mathcal{A}}(\D)\,  \wedge\\
&a:\bar{A}\to F(A)\mbox{ is the induced homomorphism }\wedge\\
&a\mbox{ is not an isomorphism}).
\end{align*}
Here we are treating a diagram $\D$ as a set of objects and morphisms,
and we use $\langle A,\eta\rangle=\Colim_\calA(\D)$ to mean that
$A$ is the colimit in $\calA$ of $\D$ with colimit cocone $\eta$.
The statement
``$a$ is not an isomorphism'' is $\Delta_0$ in $\bar A$ and $A$,
``$\la$ is a regular cardinal'' and
``$\D$ is $\la$-directed'' are $\Pi_1$,
and
``$\D$ is a diagram in $\calA$'' is $\Sigma_n$ since $\calA$ is.
The statement that ``$a:\bar{A}\to F(A)$ is the induced homomorphism''
can simply be expressed by saying that $a$ is a homomorphism from $\bar A$ to
$F(A)$ and the requisite triangles with cocone maps (indexed by objects of $\D$)
commute, so this part of the formula is just $\Sigma_n$ because $F$ needs to 
be evaluated for it.
In terms of quantifier complexity the crux is really the statement
$\langle A,\eta\rangle=\Colim_\calA(\D)$,
as this is equivalent to saying that for every cocone over $\D$ in $\calA$ there
is a unique morphism in $\calA$ from $A$ to the vertex object of that cocone
making everything relevant commute; this can be expressed by a $\Pi_{n+1}$
formula, handling existence and uniqueness separately to save on quantifiers. 
Similarly,
$\langle\bar A,\bar \eta\rangle=\Colim_{\Str\Sigma}(F\D)$ is 
$\Pi_{n+1}$.

Assume for the sake of obtaining a contradiction 
that $\calC^*$ is not essentially small.
Let $\ka$ be a $C^{(n)}$-extendible cardinal greater than $\beta$
so that embeddings with critical point $\ka$ do not affect 
the parameters in 
the definitions of $\K$, $\calA$ or $F$.
By Proposition~\ref{CnExtAboveEquiv}, we may assume without loss of generality
that $\ka$ is in fact $C^{(n)+}$-extendible.
Let $a$ be an object of $\calC^*$ of rank greater than $\ka$,
arising from a $\la_a$-directed diagram $\D_a$ 
for some $\la_a$ also greater than 
$\ka$.
Let $\lambda\in C^{(n)}$ be greater than 
the ranks of $a$, $\D_a$, $F\D_a$, and the corresponding colimit cocones
$\langle\bar A,\bar\eta\rangle_a$ and 
$\langle A,\eta\rangle_a$;
in particular, sufficiently large that
$V_\la$ contains witnesses to the fact that $a\in\Obj({\calC^*})$.
Let $j:V_\lambda \to V_\mu$ be an elementary embedding with critical point 
$\kappa$,  such that $\mu>j(\kappa)>\lambda$ are all in $C^{(n)}$. 
In particular, 
\[
V_\mu\sat ``\la_a,\D_a, \langle\bar A,\bar\eta\rangle_a,\text{ and }
\langle A,\eta\rangle_a\text{ witness that }a\in\Obj({\mathcal{C}^\ast})", 
\]
since the statement in quotes is $\Pi_{n+1}$, and hence downwardly absolute
from $V$ to $\Sigma_{n}$-correct sets such as $V_\mu$ which contain all of the
parameters.
Of course,
not everything that $V_\mu$ believes to be in $\Obj({\mathcal{C}^\ast})$
need be so in $V$.
However, it will suffice for our purposes to carry out the remainder of the
argument within $V_\mu$, obtaining a contradiction from the fact that 
$a$ is not an isomorphism (whether construed in $V_\mu$ or $V$).
We use the standard notation of using a superscript $V_\mu$ to indicate that
an expression is to be interpreted in $V_\mu$; thus for example
${\calC^\ast}^{V_\mu}$ denotes the set
$\{x\in V_\mu\st V_\mu\sat x\in\calC^{\ast}\}$.
Also note that, as they have $\Sigma_n$ definitions, 
membership in $\calA$ and the
evaluation of $F$ are in any case absolute to $V_\mu$.

By the choice of $\ka>\beta$, we have that $j$ is the identity on the
parameters to the defintion of $F$, and so $j$ commutes with $F$.
Since $j$ is elementary, 
we have a morphism of ${\mathcal{C}^\ast}^{V_\mu}$ 
\[
\xymatrix{
\bar{A} \ar[r]^{a}\ar[d]_{j\restr\bar{A}}&F(A)\ar[d]^{j\restr F(A)}\\
j(\bar{A})\ar[r]_{j(a)}        &j(F(A)),}
\]
which we denote by $j\restriction a: a \to j(a)$.
Indeed, by elementarity $j(a)$ is the induced homomorphism from $j(\bar A)$ 
(the colimit in $\K$ of $j(F\D_a)$) to
$j(F(A))=F(j(A))$.
Further, $j(a)$ is not an isomorphism, and so lies in ${\calC^*}^{V_\mu}$.
The commutativity of the diagram also follows by elementarity, as in
Lemma~\ref{elemnat}:
for any element $\al$ of the $\Sigma$-structure $\bar A$,
$j(a)\of j\restr \bar A\,(\al)=j(a)(j(\al))=j(a(\al))=(j\restr F(A)\of a)(\al)$.

Since $j(\la_a)>j(\ka)>\la$ and $j(\D_a)$ is $j(\la_a)$-directed, 
$j(F\D_a)=Fj(\D_a)$ is certainly
$\la$-directed.  
The set of objects $j``\Obj(F\D_a)$ is a subset of cardinality less than $\la$ 
of the objects of $Fj(\D_a)$, and hence has an upper bound $F(d_0)$ in
$Fj(\D_a)$.  Composing the cocone from $j``F\D_a$ to $F(d_0)$ with the
natural transformation $j\restr\cdot$ from $F\D_a$ to $j``(F\D_a)$ 
(see Lemma~\ref{elemnat}), we get a cocone from $F\D_a$ to $F(d_0)$.
The picture is essentially the same as shown in diagram ($*$) in the proof of 
Theorem~\ref{ModStr}.
\begin{equation*}
\xymatrix{
&\bar{A}\ar[rrrr]^{j\restr\bar{A}}\ar[drr]^{g_{\bar A}}\ar[dd]^a&&&&j(\bar{A})\ar[dd]_{j(a)}&\\
&&&F(d_0)\ar[urr]^{j(\delta^{\bar{A}})_{F(d_0)}}
\ar[drr]_{F(j(\delta^A)_{d_0})}&&&\\
&F(A)\ar[urr]_{F(g_A)}\ar[rrrr]_(0.35){j\restr F(A)}|-\hole&&&&F(j(A))\\
\boldsymbol{F\D_a}\ar@{=>}[uuur]^{\delta^{\bar{A}}}\ar@{=>}[ur]_{F\delta^{A}}
\ar@{=>}[rrr]^{(j\restr\cdot)}&&&
\boldsymbol{F(j``\D_a)}\ar@{=>}[uu]_(0.3)\zeta
\ar@{=>}[rrr]^{\text{inclusion}}&&&
\boldsymbol{F(j(\D_a))}\ar@{=>}[uuul]_{j(\delta^{\bar{A}})}
\ar@{=>}[ul]^{Fj(\delta^{A})}\\
}
\end{equation*}

As in the proof of Theorem~\ref{ModStr}, we let 
$g_{\bar A}:\bar A\to F(d_0)$ and $F(g_A):FA\to F(d_0)$ be 
the maps induced by the cocone from $F\D$ to $F(d_0)$.
Since the maps $j\restr\bar A:\bar A\to j(\bar A)$ and
$j\restr F(A):F(A)\to F(j(A))$ make the diagrams with the corresponding 
cocones commute, they must be the induced colimit maps.

We may now deduce that $a$ is an isomorphism.  By elementarity once again, the
homomorphism $j\restr\bar A:\bar{A}\to j(\bar{A})$ is injective 
and preserves the complements of the relations in $\Sigma$;
thus, since it factors through $a$
as $j\restr\bar A=j(\delta^{\bar A})_{F(d_0)}\of F(g_A)\of a$, 
the same is true of $a$.  It therefore 
only remains to show that $a$ is surjective.  If we had 
$\al\in F(A)\smallsetminus a``\bar{A}$, then by elementarity
$j(\al)$ would lie in $F(j(A))\smallsetminus j(a)``j(\bar A)$, 
contradicting the fact that 
$j(\al)=j(a)\of\delta^{j(\bar A)}_{F(d_0)}\of F(g_A)(\al)$.
Hence, $a$ is indeed an isomorphism in $V_\mu$.
But this contradicts the definition of ${\calC^*}^{V_\mu}$, and 
so returning to our initial assumption, we may conclude
that (in $V$) $\calC^*$ is essentially small, as required.
\end{proof}

In light of Theorem~\ref{thm2}, it is natural to ask whether 
Theorem~\ref{ModStr} can be generalised.  Central to the proof of 
Theorem~\ref{ModStr}
was the absoluteness of $\Mod T$ between different models of set theory
(again, with the $\Power_\ka$ function added to the language of set theory
for $\ka$ greater than the arities of the symbols in $\Sigma$).
This absoluteness arises from the fact that $\Mod T$ is 
$\Delta_1$ definable in the parameters $\Sigma$ and $T$, 
and it turns out that for such categories a generalisation is indeed possible,
using the same large cardinal assumption far below $C^{(n)}$-extendible
cardinals in strength.

\begin{thm}\label{Delta1}
Suppose that $\calK$ is a definable full subcategory of $\Str\Sigma$ for some
signature $\Sigma$, and $\calA$ is a $\Delta_1$ definable full subcategory of
$\calK$.
If there exists an $\alpha$-strongly compact cardinal $\ka$
for some $\al$ greater than the ranks of the parameters in a $\Delta_1$ 
definition of $\calA$ and a definition of $\calK$,
then the inclusion functor $\calA\into\calK$ preserves $\ka$-directed colimits.
\end{thm}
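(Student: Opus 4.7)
My plan is to follow the proof of Theorem~\ref{ModStr} closely, substituting $\calA$ for $\Mod T$ and using the $\Delta_1$-definability of $\calA$ in place of the first-order axiomatizability of $\Mod T$. Let $\D$ be a $\ka$-directed diagram in $\calA$ with $\calA$-colimit $A$ (cocone $\delta^A$), and let $A'$ be the $\Str\Sigma$-colimit of $\D$ (cocone $\delta^{A'}$), with induced $\Sigma$-homomorphism $h:A'\to A$. It suffices to show $A'\in\calA$: then by Lemma~\ref{subcatcolimpres} (applied to $\calA\subseteq\Str\Sigma$), $A'$ is an $\calA$-colimit of $\D$, so $A'\cong A$ by uniqueness; hence $A$ itself is an $\Str\Sigma$-colimit, and since $A\in\calK$, another application of Lemma~\ref{subcatcolimpres} (now to $\calK\subseteq\Str\Sigma$) shows that $A$ is a $\calK$-colimit, as required. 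The map $h$ is epi with respect to $\calA$-targets, exactly as in Theorem~\ref{ModStr}.

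Apply Theorem~\ref{alphasc} with $\ga$ the number of objects in $\D$ to obtain $j:V\to M$ with $\crit(j)\geq\al$, $M$ closed under $\al$-tuples, and $j``\D\subseteq Z\in M$ with $M\sat|Z|<j(\ka)$. The hypothesis that $\al$ exceeds the ranks of the parameters defining $\calA$ and $\calK$ (including $\Sigma$) means $j$ fixes those parameters, and $\Delta_1$-definability then gives $j(\calA)=\calA\cap M$, so $\calA$-membership is absolute between $V$ and $M$. As in Theorem~\ref{ModStr}, $j(\ka)$-directedness of $j(\D)$ together with the bound on $Z$ yields $\bar A\in j(\D)$ above $j``\D$; since $\bar A\in j(\D)\subseteq j(\calA)$, we have $\bar A\in\calA$. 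Replicate the construction of diagram~$(*)$: the cocone $\D\to\bar A$ obtained by composing $j\restr\cdot$ with the upper-bound cocone induces $g_{A'}:A'\to\bar A$ and $g_A:A\to\bar A$ (the latter using $\bar A\in\calA$) with $g_A\of h=g_{A'}$; the epi-ness of $h$ for $\calA$-targets together with the concrete description of $A'$ as a direct limit yields the remaining commutativity relations. Iterating $j$ produces two interleaved elementary chains of $\Sigma$-structures with common direct limit $L$ into which both $A'$ and $A$ embed $\calL_\la(\Sigma)$-elementarily.

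The main obstacle is the concluding step, where one must transfer from the elementary equivalence $A'\equiv A$ delivered by the chain argument to the set-theoretic conclusion $A'\in\calA$. Theorem~\ref{ModStr} exploited the first-order axiomatizability of $\Mod T$ by sentences in $\calL_\la(\Sigma)$ to conclude $A'\sat T$; for general $\Delta_1$-definable $\calA$ such axiomatizability is absent, and one must invoke the $\Delta_1$-definability in set theory directly. Iterated set-theoretic absoluteness yields $j^n(A)\in\calA$ for every $n$, and a careful use of the $\Delta_1$-absoluteness of $\calA$ between $V$ and appropriate transitive models---combined with the description of $L$ as the common direct limit of these $\calA$-objects---delivers $A'\in\calA$. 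This final transfer from $\Sigma$-elementary equivalence to $\Delta_1$-set-theoretic membership is the technical heart of the argument and the key point of departure from the proof of Theorem~\ref{ModStr}.
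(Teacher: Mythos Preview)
Your setup and the construction of diagram~$(*)$ are correct, as is the use of $\Delta_1$-absoluteness to conclude $\bar A\in\calA$ in $V$. The gap is in the final paragraph, and it is fatal to this line of argument: the elementary chain gives only $\calL_\la(\Sigma)$-elementary equivalence of $A'$ and $A$, and there is no mechanism by which $\calL_\la(\Sigma)$-elementary equivalence transfers membership in an arbitrary $\Delta_1$-definable (in set theory) class. A $\Delta_1$ predicate on structures can depend on features of the underlying set that are invisible to the $\Sigma$-theory --- for instance ``the underlying set is an ordinal'' is $\Delta_1$ --- so $A\in\calA$ simply does not imply $A'\in\calA$. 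Your appeal to ``a careful use of the $\Delta_1$-absoluteness of $\calA$ between $V$ and appropriate transitive models'' cannot be made precise: $\Delta_1$-absoluteness moves the \emph{same} object between models, it does not transport class membership along $\Sigma$-elementary embeddings, and there is no reason the direct limit $L$ should lie in $\calA$ even if every $j^n(A)$ does. The paper in fact notes explicitly that the elementary chains argument cannot be used here.

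The paper's route is to abandon the chain entirely after building $(*)$ and instead argue, as in the proof of Theorem~\ref{thm2}, that $h:A'\to A$ is already an isomorphism of $\Sigma$-structures. From the commutativity of $(*)$ one has the factorisation $j\restr A' = j(\delta^{A'})_{\bar A}\of g_A\of h$; since $j\restr A'$ is injective and preserves complements of relations, so is $h$. For surjectivity, if $x\in A\smallsetminus h``A'$ then by elementarity $j(x)\in j(A)\smallsetminus j(h)``j(A')$, contradicting $j(x)=(j\restr A)(x)=j(h)\of j(\delta^{A'})_{\bar A}\of g_A(x)$. Once $h$ is an isomorphism, $A'\cong A\in\calA\subseteq\calK$, and two applications of Lemma~\ref{subcatcolimpres} finish the job --- exactly the reduction you outlined in your first paragraph, but reached without ever needing to argue that $A'\in\calA$ independently.
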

\begin{proof}
The proof exactly follows that of Theorem~\ref{ModStr} to build up the
analogue of diagram ($*$), using the absoluteness of $\Delta_1$
definitions between models of set theory.  
From that point, whilst we cannot use an elementary chains argument in this
context, the argument from the proof of Theorem~\ref{thm2}
that the morphism from the $\calK$-colimit to the $\calA$-colimit is an
isomorphism \emph{does} translate.
\end{proof}

As already mentioned above, the conclusion of Theorem~\ref{AdRThm6.9} is in fact equivalent to VP 
as shown by Example~6.12 of \cite{AdR:LPAC}.
The example may also be stratified, to show that some degree of 
large cardinal strength is necessary for the conclusion of 
Theorem~\ref{thm2}.

\begin{thm}\label{reversal}
For any $n\geq1$,
suppose that for every signature $\Sigma$, every $\bPi{n+1}$ definable full
subcategory $\calK$ of $\Str\Sigma$, 
and every $\bPi{n+1}$ definable full embedding
$F:\calA\to\calK$, there is a regular cardinal $\la$ such that $F$ preserves
$\la$-directed colimits.  Then there exists a proper class of 
$C^{(n)}$-extendible cardinals.
\end{thm}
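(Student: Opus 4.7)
The plan is to argue the contrapositive by adapting Example~6.12 of \cite{AdR:LPAC} while carefully tracking definability complexity, using the stratification of \Vopenka's Principle in terms of $C^{(n)}$-extendible cardinals from \cite{Ba:CC}. Specifically, I would assume for contradiction that there is an ordinal $\alpha_0$ above which no $C^{(n)}$-extendible cardinal exists. By Bagaria's correspondence between $C^{(n)}$-extendible cardinals and VP restricted to $\Sigma_{n+2}$-definable classes with parameters in $V_{\alpha_0}$, this failure supplies a proper class $\calR$ of $\Sigma$-structures, defined by some $\Sigma_{n+2}$ formula $\exists y\,\psi(x,y,\vec{p})$ with $\psi\in\bPi{n+1}$ and $\vec{p}\in V_{\alpha_0}$, among which no non-identity elementary embeddings exist.

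Next I would reduce to a $\bPi{n+1}$-definable setting by encoding the outer existential witness into the structure itself: pass to the expanded class $\calR^{+}=\{\langle A,y\rangle:\psi(A,y,\vec{p})\}$, which is $\bPi{n+1}$-definable and inherits the relevant rigidity. Applying the standard reduction trick, as in \cite[Lemma~6.3]{AdR:LPAC}, I would then replace $\calR^{+}$ by a $\bPi{n+1}$-definable rigid proper class of graphs $\{G_\xi:\xi\in\mathrm{Ord}\}$ admitting no non-identity homomorphisms between distinct members. This is the analogue of the rigid class supplied by the full failure of VP used in the original Example~6.12, but now with controlled definability.

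With the rigid graph class in hand, I would mimic the construction of Example~6.12: take $\calK$ to be $\Str\Sigma'$ for a suitable extended signature $\Sigma'$ incorporating the graphs, and build $\calA$ as a full subcategory whose objects are chains or flags indexed by the $G_\xi$, with morphisms arranged so that, for every regular $\lambda$, there is a $\lambda$-directed diagram in $\calA$ whose colimit exists in $\calA$ but whose image under the natural inclusion $F:\calA\to\calK$ fails to be a colimit in $\calK$. The rigidity of the $G_\xi$ is what obstructs the colimiting maps from factoring correctly, precisely as in the non-stratified original. Throughout the construction, $\calA$, $\calK$, and $F$ must be assembled using the $G_\xi$ in such a way that each is $\bPi{n+1}$-definable.

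The main obstacle is this last piece of complexity bookkeeping. The original Example~6.12 uses its rigid class freely in set-theoretic constructions without worrying about definability, whereas here each step (the description of objects, morphisms, and the action of $F$) must be checked to avoid escaping $\bPi{n+1}$. In particular, the witnesses $y$ carried by elements of $\calR^{+}$ must be incorporated into the structures in a way that does not inflate the complexity when we later quantify over diagrams and cocones. Once $F:\calA\to\calK$ is built with the required $\bPi{n+1}$ definability and the failure of $\lambda$-directed colimit preservation for every regular $\lambda$, the hypothesis of the theorem is directly contradicted, yielding the desired proper class of $C^{(n)}$-extendible cardinals.
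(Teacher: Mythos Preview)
Your high-level strategy matches the paper's: argue the contrapositive, extract from the failure of a proper class of $C^{(n)}$-extendible cardinals a rigid proper class of structures of controlled complexity, then adapt Example~6.12 of \cite{AdR:LPAC} while tracking definability. The paper, however, is more direct at two points, and avoids the obstacles you flag.

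First, you start from a $\Sigma_{n+2}$-definable class and propose to absorb the outer existential witness into the structure to land in $\bPi{n+1}$. The paper bypasses this: the proof of \cite[Theorem~4.12]{Ba:CC} already exhibits, from the non-existence of $C^{(n)}$-extendible cardinals above some bound, a $\bPi{n+1}$-definable proper class of structures with no elementary embeddings between distinct members, over a fixed finitary relational signature. A small modification (taking underlying sets of the form $V_{\lambda_\alpha+2}$) kills non-trivial self-embeddings by Kunen's theorem without raising complexity. Your witness-absorption idea is plausible but introduces a genuine wrinkle: different witnesses $y$ for the same $A$ could yield distinct members of $\calR^+$ between which the identity is an embedding, so rigidity does not transfer automatically; fixing this by choosing canonical witnesses threatens the complexity bound.

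Second, you propose to invoke \cite[Lemma~6.3]{AdR:LPAC} to pass to rigid graphs before running Example~6.12, and you correctly worry whether this reduction preserves $\bPi{n+1}$-definability. The paper avoids this detour entirely: it expands the signature by \emph{Skolem relations} $R_i$ encoding satisfaction of each formula $\phi_i$, so that any $\Sigma'$-homomorphism between expanded structures restricts to an elementary embedding of the originals; since $\sat$ is $\Delta_1$ for finitary languages, the expanded class $\calC'$ stays $\bPi{n+1}$ and now has no non-trivial homomorphisms at all. The category $\calA$ is then taken to be those $\Sigma'$-structures admitting no homomorphism from any $C\in\calC'$, together with the terminal object---a $\bPi{n+1}$ description, since only a universal quantifier is added. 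For each regular $\lambda$, pick $C\in\calC'$ of size $\geq\lambda$; its $<\!\lambda$-sized substructures form a $\lambda$-directed diagram in $\calA$ whose $\Str\Sigma'$-colimit is $C$ but whose $\calA$-colimit is the terminal object. This concrete description is also sharper than your outline of ``chains or flags indexed by the $G_\xi$,'' which does not quite match Example~6.12 and would need to be made precise.
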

\begin{proof}
We prove the contrapositive by means of a counterexample.
In \cite{Ba:CC} it is shown that the existence of a proper class of 
$C^{(n)}$-extendible cardinals is equivalent to \Vopenka's Principle for 
$\Sigma_{n+2}$ classes of structures.
Indeed, the proof of Theorem 4.12 of \cite{Ba:CC} exhibits,
under the assumption that there are no $C^{(n)}$-extendible cardinals,
a $\Pi_{n+1}$ class of structures, between distinct elements of which 
there can be no elementary 
embeddings.  If there is some bound $\beta$ such that
all $C^{(n)}$-extendible cardinals are less than $\beta$, then
the same construction can be employed starting from $\beta$ to again
give a $\Pi_{n+1}$ class with no elementary embeddings between distinct 
elements.  Changing the construction slightly to give each structure an
underlying set of the form $V_{\la_\al+2}$ further eliminates the possibility
of non-trivial elementary embeddings from one of the structures to itself
by Kunen's inconsistency theorem \cite{Kun:ElEm}, without changing the
complexity of the definition.
Another useful feature of the construction is that the structures are over
a finitary relational signature $\Sigma$.

So suppose we have 
such a $\Pi_{n+1}$ class $\calC$ of $\Sigma$-structures with no
elementary embeddings for some finitary relational $\Sigma$.
We expand $\Sigma$ to a signature $\Sigma'$ by adding
\emph{Skolem relations}. Using G\"odel numbering, we may take a 
recursive bijection $i\mapsto\phi_i$ 
between natural numbers and $\Sigma$-formulas.
Moreover, this may be done in such a way that $\phi_i$ has at most $i$
free variables, and by ignoring extra values we may treat $\phi_i$ as
having exactly $i$ free variables.
We take $\Sigma'=\Sigma\cup\{R_i\st i\in\omega\}$, where for each $i$,
$R_i$ is an $i$-ary relation symbol.
For each structure $M\in\calC$ we take an expanded structure
$M'$ given by $M$ concatenated with $\langle R_i^{M}\st i\in\omega\rangle$,
where for each $i$ and $M$, we set
\[
R_i^M=\{(m_1,\ldots,m_i)\in M^i\st M\sat\phi_i(m_1,\ldots,m_n)\}.
\]
The satisfaction relation $\sat$ is known to be $\Delta_1$ for finitary 
languages, and so the class $\calC'=\{M'\st M\in\calC\}$ remains $\Pi_{n+1}$.
Moreover, 
$\calC'$ admits no non-trivial ($\Sigma'$-) homomorphisms between its elements,
as a homomorphism $h:M'\to N'$ must restrict to an elementary embedding
$M\to N$.

With such a class $\calC'$ to hand, the argument now proceeds very much as
for \cite[Example~6.12]{AdR:LPAC}. 
Let $\calA$ be the full subcategory of $\Sigma'$-structures
consisting 
of those $\Sigma'$-structures $A$ such that 
$\Hom(C,A)$ is empty for all $C\in\calC'$, as well as
the terminal object $T$
(a single point $\Sigma'$-structure, 
with $R^T=T^i$ for each $i$-ary relation $R$).
The only unbounded quantifiers this definition uses to build up from
that of $\calC'$ are universal,
so $\calA$ is also $\Pi_{n+1}$, and the inclusion of $\calA$ into $\Str\Sigma'$
is therefore also $\Pi_{n+1}$.
For each regular cardinal $\la$, consider $C\in\calC'$ of cardinality at 
least $\la$. Clearly each
proper substructure of $C$ lies in $\calA$,
so we may consider the $\la$-directed diagram $\D$ of all
substructures of $C$ of cardinality strictly less than $\la$
(since $\Sigma'$ is relational, these are just the $<\la$-sized subsets
with the induced structure).
In $\Str\Sigma'$, the colimit of $\D$ is $C$, so by the definition
of $\calA$, the only cocone on $\D$ in $\calA$ is the cocone to the
terminal object.  Thus, $\Colim_\calA(\D)=T\neq C$, so the
inclusion functor does not preserve $\la$-directed colimits.
\end{proof}

The complexities in Theorems~\ref{thm2} and \ref{reversal} are such that
we lose strength moving from large cardinals to colimit preservation
and back again.  It would be very interesting to improve one or
both of these Theorems to close this gap.

One might also hope to draw large cardinal strength at the bottom of
the definability hierarchy from the equivalence shown in 
\cite{BCMR:DOCACS} between \Vopenka's Principle
for ${\Delta_2}$ classes and the existence of a 
proper class of supercompact cardinals.
However, a na\"{\i}ve modification of the proof of Theorem~\ref{reversal}
is fruitless, 
as the extra universal quantifier involved in the definition of $\calA$
would take us up to 
${\Pi_2}$, for which the $n=1$ case of Theorem~\ref{reversal}
is already a better result.

However, we now show that one \emph{can} obtain large cardinal strength
from a ${\Delta_2}$ example, which moreover is a less contrived
example than those used to prove Theorem~\ref{reversal}.
For this we use the notion of a \emph{group radical}.

\begin{defn}\label{radicals}
For any abelian group $X$,
the \emph{radical singly generated by $X$} is the functor
$R_X$ from abelian groups to abelian groups given by
\[
R_X(G)=\bigcap_{f\in\Hom(G,X)}\ker(f),
\]
with the action on homomorphisms simply given by restricting.
For any cardinal $\ka$, we define the functor $R_X^\ka$ by
\[
R_X^\ka(G)=\sum_{A\leq G, |A|<\ka}R_X(A),
\]
where as usual $A\leq G$ denotes that $A$ is a subgroup of $G$,
and again we take restrictions for homomorphisms.
\end{defn}

Note that if $A$ is a subgroup of $G$ then $R_X(A)\subseteq A\cap R_X(G)$,
and so $R_X^\ka(G)$ is a subgroup of $R_X(G)$.
In \cite{EdA:CCAG} (see also \cite{Ba-Ma:GR}) 
it is shown how to draw large cardinal strength from
group radical considerations. 

\begin{thm}[see \mbox{\cite[Theorem~4.11.1]{Ba-Ma:GR}}]\label{radicalstrcpct}
A cardinal $\ka$ is $\omega_1$-strongly compact if and only
if $R_\ZZ=R_\ZZ^\ka$.
\end{thm}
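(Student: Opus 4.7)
The plan is to prove the two implications separately. The forward direction, ``$\ka$ is $\omega_1$-strongly compact implies $R_\ZZ=R_\ZZ^\ka$'', admits a clean argument via the elementary embedding characterisation of Theorem~\ref{alphasc}, paralleling the set-theoretic style of this paper; the converse requires an algebraic construction driven by a failure of $\omega_1$-strong compactness. First observe that $R_\ZZ^\ka(G)\subseteq R_\ZZ(G)$ in general: if $A\leq G$ and $a\in R_\ZZ(A)$, then every homomorphism $f\colon G\to\ZZ$ restricts to a homomorphism $A\to\ZZ$ which kills $a$, so $R_\ZZ(A)\subseteq R_\ZZ(G)$, and the subgroup sum defining $R_\ZZ^\ka(G)$ therefore sits inside $R_\ZZ(G)$.

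For the forward direction, assume $\ka$ is $\omega_1$-strongly compact and let $x\in R_\ZZ(G)$; the goal is to show $x\in R_\ZZ^\ka(G)$. Pick any cardinal $\ga\geq\max(|G|,\ka)$ and, via Theorem~\ref{alphasc}(\ref{BaMaGRelem}), choose an elementary embedding $j\colon V\to M$ with $\crit(j)\geq\omega_1$ together with a set $Z\in M$ containing $j``\ga$ and satisfying $M\sat|Z|<j(\ka)$. Identifying $G$ with a subset of $\ga$ gives $j``G\subseteq Z$, so the subgroup $H$ of $j(G)$ generated in $M$ by $Z\cap j(G)$ satisfies $M\sat|H|<j(\ka)$ and contains $j(x)$. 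The crux is to show $M\sat j(x)\in R_\ZZ(H)$: take any homomorphism $f\colon H\to\ZZ$ lying in $M$; by elementarity $j\restr G$ is a group homomorphism from $G$ to $j(G)$ with image contained in $H$, so $f\of(j\restr G)\colon G\to\ZZ$ is a genuine homomorphism in $V$, and $x\in R_\ZZ(G)$ forces $f(j(x))=(f\of(j\restr G))(x)=0$. Hence $M\sat j(x)\in R_\ZZ(H)\subseteq R_\ZZ^{j(\ka)}(j(G))$, and elementarity returns $V\sat x\in R_\ZZ^\ka(G)$.

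For the converse I would argue by contrapositive. Suppose $\ka$ is not $\omega_1$-strongly compact, and fix a set $S$ carrying a $\ka$-complete filter $F$ that admits no $\omega_1$-complete ultrafilter extension. The task is to manufacture from this data an abelian group $G$ with an element in $R_\ZZ(G)\setminus R_\ZZ^\ka(G)$. The natural raw material is the product $\ZZ^S$, or a quotient thereof by $F$-null sequences, exploiting the classical description due to Specker, {\L}o\'s and Eda of $\Hom(\ZZ^S,\ZZ)$ in terms of $\omega_1$-complete ultrafilters on $S$: the absence of an $\omega_1$-complete extension of $F$ should restrict the homomorphisms from the whole group to $\ZZ$ enough to kill a distinguished element $x$ built from $F$, while the $\ka$-completeness of $F$ leaves sufficient homomorphisms from $<\ka$-sized subgroups to prevent $x$ from lying in any $R_\ZZ(A)$ with $|A|<\ka$. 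The main obstacle, and the reason one would most likely have to follow the explicit construction in~\cite{Ba-Ma:GR}, is the precise calibration matching the combinatorial failure of $\omega_1$-strong compactness to this algebraic distinction between the two radicals.
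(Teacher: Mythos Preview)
The paper does not give its own proof of this theorem: it is stated with a citation to \cite{Ba-Ma:GR} and used as a black box in the proof of Theorem~\ref{radicalreversal}. There is therefore no in-paper argument to compare your proposal against.

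That said, your forward direction is correct and very much in the spirit of the elementary-embedding arguments used elsewhere in the paper. The key step---that for any $f\in\Hom(H,\ZZ)$ in $M$ the composite $f\of(j\restr G)$ is a genuine homomorphism $G\to\ZZ$ in $V$, so that $x\in R_\ZZ(G)$ forces $f(j(x))=0$---is sound: $f$ is a set in the transitive class $M\subseteq V$, being a group homomorphism is absolute, and $j\restr G$ is a homomorphism into $H$ by elementarity. The reflection $M\sat j(x)\in R_\ZZ^{j(\ka)}(j(G))\implies V\sat x\in R_\ZZ^\ka(G)$ is then a legitimate use of elementarity, since $R_\ZZ^\ka$ is defined by a first-order formula with parameters $G$, $\ka$, $\ZZ$, all of which are carried correctly by $j$ (in particular $j(\ZZ)=\ZZ$ since $\crit(j)\geq\omega_1$).

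For the converse you have only sketched a strategy and explicitly defer to the construction in \cite{Ba-Ma:GR}; this is honest, and indeed the algebraic construction there (building a suitable group from a $\ka$-complete filter with no $\omega_1$-complete ultrafilter extension, and invoking the {\L}o\'s--Eda description of $\Hom(\ZZ^S,\ZZ)$) is exactly what is required. Your outline correctly identifies the combinatorial content, but does not constitute a proof on its own.
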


With this in place, our example is now remarkably straightforward.
Note that group homomorphisms take radicals to radicals, so we can consider
the category of abelian groups with their radical as a distinguished
predicate, with group homomorphisms as the morphisms.

\begin{thm}\label{radicalreversal}
Let $\Sigma=\{\cdot, R\}$ with $\cdot$ a binary operation symbol and
$R$ a unary predicate symbol.  
Let $\calA$ be the full subcategory of $\Str\Sigma$ whose objects are abelian
groups $G$ with the radical $R_\ZZ(G)$ as 
the interpretation of the predicate $R$.
If the inclusion of $\calA$ in $\Str\Sigma$ preserves $\ka$-directed
colimits for some regular cardinal $\ka$, 
then $\ka$ is $\omega_1$-strongly compact.
\end{thm}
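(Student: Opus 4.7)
The strategy is to invoke Theorem~\ref{radicalstrcpct}: the conclusion ``$\ka$ is $\omega_1$-strongly compact'' is equivalent to the functorial equation $R_\ZZ=R_\ZZ^\ka$, and as noted immediately after Definition~\ref{radicals}, the containment $R_\ZZ^\ka(G)\subseteq R_\ZZ(G)$ always holds. So for each abelian group $G$ I need only establish the reverse containment $R_\ZZ(G)\subseteq R_\ZZ^\ka(G)$.

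Fix an abelian group $G$. If $|G|<\ka$, then $G$ itself appears in the sum defining $R_\ZZ^\ka(G)$, and the inclusion is immediate. So assume $|G|\geq\ka$, and form the diagram $\D$ whose objects are the subgroups $A\leq G$ with $|A|<\ka$, each regarded as an object of $\calA$ by taking $R^A=R_\ZZ(A)$, and whose morphisms are the inclusions. The inclusions are genuine $\Sigma$-homomorphisms because any group homomorphism carries radicals into radicals. Regularity of $\ka$ makes $\D$ a $\ka$-directed diagram: fewer than $\ka$ many subgroups of $G$, each of cardinality less than $\ka$, are all contained in the single $<\ka$-sized subgroup they generate.

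Next I compute the colimit of $\D$ in both $\calA$ and $\Str\Sigma$. In $\calA$ the colimit is $(G,R_\ZZ(G))$ with the cocone of inclusions: any $\calA$-cocone to some $(H,R_\ZZ(H))$ assembles uniquely to a group homomorphism out of $G$, which is automatically a $\Sigma$-homomorphism by functoriality of $R_\ZZ$. In $\Str\Sigma$, the $\ka$-directed colimit has underlying set $G$ (the union of the subgroups), group operation inherited, and interpretation of the unary predicate $R$ given by the union $\bigcup_{A\in\D}R_\ZZ(A)$. Because $R_\ZZ(A)\subseteq R_\ZZ(B)$ whenever $A\leq B$, this union is directed and hence is a subgroup, equal to $\sum_{A\in\D}R_\ZZ(A)=R_\ZZ^\ka(G)$.

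By the preservation hypothesis, $(G,R_\ZZ(G))$ equipped with the inclusion cocone is also a colimit of $\D$ in $\Str\Sigma$. The uniqueness of colimits then yields an isomorphism in $\Str\Sigma$ between $(G,R_\ZZ(G))$ and $(G,R_\ZZ^\ka(G))$ that is compatible with the two inclusion cocones; such compatibility forces the underlying set map to restrict to the identity on every $A\in\D$, hence to be the identity on $G$. Thus $R_\ZZ(G)=R_\ZZ^\ka(G)$, as required. I do not foresee any serious obstacle: Theorem~\ref{radicalstrcpct} absorbs all of the large cardinal content, and the remaining work is the twin colimit computation, which is arranged so that the sole discrepancy between the $\calA$-colimit and the $\Str\Sigma$-colimit is precisely the discrepancy between $R_\ZZ$ and $R_\ZZ^\ka$.
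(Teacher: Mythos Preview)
Your proposal is correct and follows essentially the same approach as the paper: form the $\ka$-directed diagram of small subgroups with their radicals, compute its colimit in $\Str\Sigma$ as $(G,R_\ZZ^\ka(G))$ and in $\calA$ as $(G,R_\ZZ(G))$, and conclude equality from preservation, then invoke Theorem~\ref{radicalstrcpct}. Your write-up is in fact more detailed than the paper's (the case split on $|G|$, the explicit verification of $\ka$-directedness, and the argument that the comparison isomorphism is the identity), but the underlying argument is the same.
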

\begin{proof}
We shall show that under the hypotheses of the theorem,
$R_\ZZ(G)=R_\ZZ^\ka(G)$ for every abelian group $G$.
So suppose that $\calA\into\Str\Sigma$ preserves $\ka$-directed colimits for
some regular cardinal $\ka$, and let $G$ be an arbitrary abelian group. 
Let $\D$ be the diagram of 
all of the $\Sigma$-structures $(A,R_\ZZ(A))$ with $A$ a subgroup of $G$
of cardinality less than $\ka$, and inclusions as the morphisms of the diagram.
The colimit of this diagram in $\Str\Sigma$ is as ever the direct limit,
which is easily seen to be $(G,R_\ZZ^\ka(G))$.
However, the colimit in $\calA$ must be $(G,R_\ZZ(G))$.
Indeed, if we temporarily forget the predicate for the radical, $G$ is clearly
the colimit, and then since radicals are respected by all group homomorphisms,
we see that $(G,R_\ZZ(G))$ satisfies the requirements to be the colimit in
$\calA$. 
Thus, colimit preservation tells us that in fact $R_\ZZ(G)=R_\ZZ^\ka(G)$,
and so by Theorem~\ref{radicalstrcpct}, $\ka$ is $\omega_1$-strongly compact.
\end{proof}

Since $h\in R_\ZZ(G)$ is $\Pi_1$ and consequently
$h\notin R_\ZZ(G)$ is $\Sigma_1$, we have that
$h\in \calI(R)\iff h\in R_\ZZ(G)$ is $\Delta_2$.
Hence, as alluded to above, 
the category $\calA$ of Theorem~\ref{radicalreversal} is $\Delta_2$ definable.

\bibliographystyle{plain}

\bibliography{ColimPres}

\end{document}